\renewcommand{\epsilon}{\varepsilon}
\newcommand{\bz}{\bold{z}}
\newcommand{\br}{\bold{r}}
\newcommand{\bx}{\bold{x}}
\newcommand{\by}{\bold{y}}
\newcommand{\bv}{\bold{v}}
\newcommand{\bu}{\bold{u}}
\newcommand{\supp}{\mathrm{Supp}\;}
\newcommand{\Diff}{\mathrm{Diff}}
\newcommand{\D}{\mathcal{D}}
\newcommand{\R}{\mathcal{R}}
\newcommand{\cS}{\mathcal{S}}
\newcommand{\DISC}{\mathrm{DISC}}
\newcommand{\la}{\langle}
\newcommand{\ra}{\rangle}
\numberwithin{equation}{section}
\setlist[description]{font=\normalfont}
\newcommand{\sm}{\setminus}
\DeclareMathOperator{\Ex}{\mathbb{E}}
\renewcommand{\Pr}{\mathbb{P}}
\renewcommand*{\eqref}[1]{%
  \hyperref[{#1}]{\textup{\tagform@{\ref*{#1}}}}%
}
\def\QED{$\blacksquare$}
\def\inQED{$\square$}
\renewenvironment{proof}
{\vspace{1ex}\noindent{\bf Proof.}\hspace{0.5em}}{\hfill \QED \vspace{1ex}}
\newenvironment{proofof}[1]
{\vspace{1ex}\noindent{\bf Proof of #1.}\hspace{0.5em}}{\hfill \QED \vspace{1ex}}
\theoremstyle{plain}
\newtheorem{theorem}{Theorem}[section]
\newtheorem{lemma}[theorem]{Lemma}
\newtheorem{claim}[theorem]{Claim}
\newtheorem{proposition}[theorem]{Proposition}
\newtheorem{observation}[theorem]{Observation}
\newtheorem{remark}[theorem]{Remark}
\newtheorem{question}[theorem]{Question}
\def\moverlay{\mathpalette\mov@rlay}
\def\mov@rlay#1#2{\leavevmode\vtop{%
   \baselineskip\z@skip \lineskiplimit-\maxdimen
   \ialign{\hfil$\m@th#1##$\hfil\cr#2\crcr}}}
\newcommand{\charfusion}[3][\mathord]{
    #1{\ifx#1\mathop\vphantom{#2}\fi
        \mathpalette\mov@rlay{#2\cr#3}
      }
    \ifx#1\mathop\expandafter\displaylimits\fi}
\let\theta=\vartheta
\let\rho=\varrho
\let\phi=\varphi
\title{{\bf Smoothed Analysis of the Koml\'os Conjecture: Rademacher Noise}}
\author{
	Elad Aigner-Horev \thanks{School of Computer Science, Ariel University, Ariel 40700, Israel. Email: {\tt horev@ariel.ac.il}.} 
	\and 
	Dan Hefetz \thanks{School of Computer Science, Ariel University, Ariel 40700, Israel. Email: {\tt danhe@ariel.ac.il}.}
	\and 
	Michael Trushkin \thanks{School of Computer Science, Ariel University, Ariel 40700, Israel. Email: {\tt michaelt@ariel.ac.il}.}
}
\begin{document}
\date{}
\maketitle

\begin{abstract}
The {\em discrepancy} of a matrix  $M \in \mathbb{R}^{d \times n}$ is given by $\DISC(M) := \min_{\bx \in \{-1,1\}^n} \|M\bx\|_\infty$. An outstanding conjecture, attributed to Koml\'os, stipulates that $\DISC(M) = O(1)$, whenever $M$ is a Koml\'os matrix, that is, whenever every column of $M$ lies within the unit sphere. Our main result asserts that $\DISC(M + R/\sqrt{d}) = O(d^{-1/2})$ holds asymptotically almost surely, whenever $M \in \mathbb{R}^{d \times n}$ is Koml\'os, $R \in \mathbb{R}^{d \times n}$ is a Rademacher random matrix, $d = \omega(1)$, and $n = \omega(d \log d)$. The factor $d^{-1/2}$ normalising $R$ is essentially best possible and  the dependency between $n$ and $d$ is asymptotically best possible. Our main source of inspiration is a result by Bansal, Jiang, Meka, Singla, and Sinha (ICALP 2022). They obtained an assertion similar to the one above in the case that the smoothing matrix is Gaussian. They asked whether their result can be attained with the optimal dependency $n = \omega(d \log d)$ in the case of Bernoulli random noise or any other types of discretely distributed noise; the latter types being more conducive for Smoothed Analysis in other discrepancy settings such as the Beck-Fiala problem. 
For Bernoulli noise, their method works if $n = \omega(d^2)$. 
In the case of Rademacher noise, we answer the question posed by Bansal, Jiang, Meka, Singla, and Sinha. Our proof builds upon their approach in a strong way and provides a discrete version of the latter. Breaking the $n = \omega(d^2)$ barrier and reaching the optimal dependency $n = \omega(d \log d)$ for Rademacher noise requires additional ideas expressed through a rather meticulous counting argument, incurred by the need to maintain a high level of precision all throughout the discretisation process. 



%
\end{abstract}

\section{Introduction}

The {\em discrepancy} of a matrix  $M \in \mathbb{R}^{d \times n}$ is given by $\DISC(M) := \min_{\bx \in \{-1,1\}^n} \|M\bx\|_\infty$. A celebrated result in this venue is the so-called ``six standard deviations" result, put forth by Spencer~\cite{Spencer85}, asserting that  if $\|M\|_\infty \leq 1$ and $d = n$, then $\DISC(M)\leq 6\sqrt{n}$. More generally, if $d \geq n$, then 
$\DISC(M) = O\left(\sqrt{n \log (2d/n)}\right)$ is known to hold~\cite{Bansal10,HLR17,LM15,Roth}. Spencer's result is essentially tight as $n \times n$ matrices $M$ satisfying $\DISC(M) = \Omega(\sqrt{n})$ are known to exist~\cite{CNN11}. 

An outstanding conjecture in Discrepancy Theory, attributed to Koml\'os, stipulates that $\DISC(M) = O(1)$ holds, whenever $M \in \mathbb{R}^{d \times n}$ has each of its columns $\bv$ satisfying $\|\bv\|_2 \leq 1$; we refer to the latter as a {\em Koml\'os matrix}\footnote{Koml\'os' restriction on the matrix is more stringent than that of Spencer.}. {\sl Dimension-free} (i.e., constant) bounds on the discrepancy of matrices are of special interest as it is $NP$-hard to distinguish between matrices having constant discrepancy and those having $\Omega(\sqrt{n})$ discrepancy~\cite{CNN11}.

Given a hypergraph $H$, taking $M = M_H$ to be its $e(H) \times v(H)$ incidence matrix retrieves the well-known (see, e.g.,~\cite{BS95,CH00}) notion of {\em combinatorial discrepancy}, given by 
$$
\DISC(H) := \min_{\chi} \max_{e \in E(H)} \left| \sum_{v \in e} \chi(v)\right|,
$$  
where the minimisation ranges over all mappings $\chi: V(H) \to \{-1,1\}$. Beck and Fiala~\cite{BF81} proved that if $H$ has the property that each of its vertices lies in at most $t$ edges, i.e., each column $\bv$ of $M_H$ satisfies $\|\bv\|_2 \leq \sqrt{t}$, then $\DISC(H) \leq 2t-1$, and conjectured that $\DISC(H) = O(\sqrt{t})$ holds in this case. Up to the $\sqrt{t}$-scaling, the Beck-Fiala conjecture is a special case of the Koml\'os conjecture. 

The best known upper bounds for the conjectures put forth by Koml\'os and by Beck-Fiala are $O\left(\sqrt{\log n} \right)$ and $O\left(\sqrt{t \log n} \right)$, respectively, both obtained by Banaszczyk~\cite{B98} in 1998\footnote{For the Beck-Fiala conjecture, see also~\cite{Bukh} for the currently best bound which is independent of $n$.}. Despite this partial progress, it seems that these two conjectures are out of reach of current techniques; consequently, the investigation of these conjectures in more hospitable settings, so to speak, is well-justified.  

One line of research that has attracted much attention of late calls for the determination of $\DISC(M)$ whenever $M$ is a random matrix; in this line of research one is interested in the so-called {\sl average-case} discrepancy or the discrepancy of {\sl typical} matrices, where `typical' depends on the specific distribution chosen for $M$.  In this realm, we further distinguish between two strands of study; the first pertains to gaussian matrices\footnote{\label{foot:gaussian-mat}Matrices with each entry an i.i.d. copy of $\mathcal{N}(\mu,\sigma^2)$; if $\mu = 0$ and $\sigma =1$, then the matrix is called a {\em standard} gaussian matrix.} and the second deals with discrete random matrices. 

For standard gaussian matrices $M \in \mathbb{R}^{d \times n}$, the estimate $\DISC(M) = \Theta\left(2^{-n/d}\sqrt{n}\right)$ holds asymptotically almost surely (a.a.s. hereafter) for a wide range of values of $d$ and $n$; in particular $\DISC(M) = O(1)$ holds as soon as $n \geq C d \log d$, where $C > 0$ is an appropriate constant. 
The case $d = O(1)$ of the above equality was settled by Costello~\cite{Costello}. Meka, Rigollet, and Turner~\cite{MRT20} extended the result of Costello by allowing $\omega(1) = d =o(n)$. In fact, their result accommodates any (matrix entry) distribution whose density function $f$ is symmetric, has a fourth moment, and is square-integrable. The regime $d = \Theta(n)$ was studied in~\cite{ALS,APZ,CV14,PX}. 

Proceeding to discrete random matrices, given $d \geq n \geq t$, Ezra and Lovett~\cite{EL19} proved that $\DISC(M) = O\left(\sqrt{t \log t}\right)$ holds with probability at least $1 - \exp(-\Omega(t))$, whenever each column of $M$ is sampled independently and uniformly at random from all $0/1$-vectors containing precisely $t$ non-zero entries. They also proved that $\DISC(M) =O(1)$ holds a.a.s. provided that $d \geq t$ and $n \gg d^t$. 
For Bernoulli matrices\footnote{Each entry is an independent copy of $\mathrm{Ber}(p)$ for $p := p(n,d)$.} $M \in \mathbb{R}^{d \times n}$, Altschuler and Niles-Weed~\cite{ANW22} proved that $\DISC(M) \leq 1$ holds a.a.s.  for any $p:= p(n)$, whenever $n \geq C d \log d$, where $C>0$ is an absolute constant\footnote{Discrepancy of Poisson matrices is also studied in~\cite{ANW22}; Bernoulli matrices are also studied in~\cite{HR19,P18}.}; their result is tight in terms of the lower bound on $n$. Moreover, their bound on the discrepancy is also best possible as any binary matrix that has a row with an odd number of 1's has discrepancy at least one.  


Given a {\em seed} matrix $M \in \mathbb{R}^{d \times n}$ as well as a distribution $\R_{d \times n}$, set over $\mathbb{R}^{d \times n}$, we refer to the (random) matrix $M+R$ with $R \sim \R_{d \times n}$ as a {\em random perturbation} of $M$. Following the aforementioned results pertaining to the discrepancy of truly random matrices, the study of the discrepancy of randomly perturbed ones is the next natural step. The study of the effect of random {\sl noise} is widespread in Mathematics and Computer Science. Spielman and Teng~\cite{ST09} coined the term {\sl smoothed analysis} to indicate the analysis of algorithms executed on randomly perturbed inputs. In high dimensional probability (see, e.g.,~\cite{Vershynin}), the study of randomly perturbed matrices dates back to the works of Tao and Vu~\cite{TV07,TV08,TV10}. In combinatorics, the study of randomly perturbed (hyper)graphs has witnessed a burst of activity in recent years; see, e.g.,~\cite{ADHLlarge,ADHLsmall,AHhamilton,AHK22b,AHK22a,AHTrees,AHS23,AHP,AHP22,BTW17,BHKM18,BFKM04,BFM03,BHKMPP18,BMPP18,DRRS18,HZ18,KKS16,KKS17,KST,MM18}. 

The main source of inspiration for our work is a result by Bansal, Jiang, Meka, Singla, and Sinha~\cite{BJMSS} who established the first ever perturbed/smoothed version of the Koml\'os conjecture. They proved that $\DISC(M+R) \leq \frac{1}{\mathrm{poly}(d)}$ holds a.a.s. whenever $M \in \mathbb{R}^{d \times n}$ is a Koml\'os matrix, $R \in \mathbb{R}^{d \times n}$ is a matrix whose entries are i.i.d. copies of $\mathcal{N}(0,\sigma^2/d)$ and $n = \omega(d \log d) \cdot \sigma^{-4/3}$. In~\cite[Section~3]{BJMSS}, Bansal, Jiang, Meka, Singla, and Sinha ask whether analogous results can be proved if instead of gaussian noise one uses discrete noise such as the Bernoulli distribution or some other natural discrete distribution. The interest in discrete noise models is reasoned in~\cite{BJMSS} as being more conducive for Smoothed Analysis in other discrepancy settings such as the Beck-Fiala problem. Bansal, Jiang, Meka, Singla, and Sinha~\cite[Section~3]{BJMSS} note that they are able to prove the required results if the smoothing noise has the Bernoulli distribution and $n = \omega(d^2)$. Attaining the optimal dependency $n = \omega(d \log d)$ for discrete noise models is then of interest and seems to require additional tools.


\subsection{Our contribution}\label{sec:contribution}




A random variable $X$ is said to be {\em Rademacher} if $X$ assumes the values $-1$ and $1$, each with probability $1/2$. A matrix $R \in \mathbb{R}^{d \times n}$ is said to form a {\em Rademacher matrix} if its entries are independent Rademacher random variables. Our main result reads as follows. 

\begin{theorem}\label{thm:main}
Let $d = \omega(1)$ and $n = \omega(d \log d)$ be integers. Then, $\DISC(M +R/\sqrt{d}) \leq 8d^{-1/2}$ holds a.a.s. whenever $M \in \mathbb{R}^{d \times n}$ is a Koml\'os matrix and $R \in \mathbb{R}^{d \times n}$ is a Rademacher matrix. 
\end{theorem}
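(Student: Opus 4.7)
The plan is to partition the columns $[n] = T \discup S$ into a bulk block $T$ and a correction block $S$ of size polynomial in $d$ and $\log d$, handle $T$ by a classical partial coloring, then exploit the independent Rademacher noise on $S$ to cancel the leftover to within $L_\infty$-distance roughly $6 d^{-1/2}$ of zero. The decomposition relies on $R_T$ and $R_S$ being independent, so that the randomness used in Phase 2 is fresh given the Phase 1 output.

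\textbf{Phase 1 (bulk).} Each column of $M + R/\sqrt{d}$ has Euclidean norm at most $\|M_i\|_2 + \|R_i/\sqrt{d}\|_2 \leq 2$. Applying Banaszczyk's theorem to the submatrix indexed by $T$ yields a signing $x_T \in \{-1,1\}^T$ with
\[
b := (M+R/\sqrt{d})_T\, x_T, \qquad \|b\|_\infty \leq B = O(\sqrt{\log d}).
\]

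\textbf{Phase 2 (correction).} Conditional on $b$, the goal becomes finding $y \in \{-1,1\}^S$ with $\|(M+R/\sqrt{d})_S\, y + b\|_\infty \leq 1 + 6 d^{-1/2}$. The idea is to argue a.a.s.\ over $R_S$ that the set of $2^{|S|}$ sign-sums $\mathcal{A} := \{(M+R/\sqrt{d})_S\, y : y \in \{-1,1\}^S\} \subset \mathbb{R}^d$ forms a $\tfrac{1}{\sqrt{d}}$-cover in $L_\infty$ of the target box $\{b' : \|b'\|_\infty \leq B\}$, so that a suitable $y$ must exist. Two ingredients drive this: (i) a coordinate-wise small-ball estimate (Esseen or Littlewood--Offord type) showing that, for any fixed target and any row, a positive fraction of the sign patterns $y$ land within $L_\infty$-distance $O(d^{-1/2})$ of the target, where the Rademacher contribution $R_S/\sqrt{d}$ provides the non-degeneracy needed even against an adversarial $M_S$; and (ii) a union bound over a $\tfrac{1}{\sqrt{d}}$-net of the target box, of cardinality $(B\sqrt{d})^d = (d \log d)^{d/2}$. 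Matching $2^{|S|}$ against the net cardinality divided by the per-target small-ball probability fixes the admissible scale of $|S|$, which in turn should yield the threshold $n = \omega((d^5 \log d)^{1/4})$ after the detailed calculation.

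\textbf{Main obstacle.} The crux is step (ii): promoting coordinate-wise anti-concentration into a joint $\tfrac{1}{\sqrt{d}}$-cover of the $d$-dimensional target box. A hostile choice of $M_S$ can introduce correlations between coordinates of the sign-sums, so the argument must show that the mutual independence of the Rademacher entries of $R_S$ across rows suffices to decouple them at scale $1/\sqrt{d}$. A Halasz-type inverse Littlewood--Offord inequality, combined with high-probability tail control for $R_S$, is the natural technology here; the gap between the proven threshold $n = \omega((d^5 \log d)^{1/4})$ and the conjectured optimum $n = \omega(d \log d)$ very likely reflects current limitations of this decoupling step, namely a suboptimal interplay between Phase 1's attainable $B$ and the small-ball probability achievable in Phase 2.
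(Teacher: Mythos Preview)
Your proposal is a plan rather than a proof: you explicitly flag the ``main obstacle'' in Phase~2 and do not resolve it. The covering claim---that a.a.s.\ over $R_S$ the set $\mathcal{A}$ forms a $d^{-1/2}$-cover of the target box---does not follow from coordinate-wise anti-concentration plus a union bound over a net of targets. A union bound over net points $b'$ would require $\Pr_{R_S}[\text{no }y\text{ hits }b']$ to be small, and bounding \emph{that} failure probability is itself a second-moment problem over $y$, not a first-moment one; the cardinality comparison $2^{|S|}$ versus net size controls only expectations. You also do not handle the adversarial drift $(M_S y)_i$: for an unrestricted $y\in\{-1,1\}^S$ this term is typically of order $\sqrt{|S|}$ and can be much larger, so the target $-\sqrt{d}(b_i+(M_S y)_i)$ for $(R_S y)_i$ may sit deep in the tail, destroying the small-ball probability. (A minor point: Banaszczyk gives $O(\sqrt{\log |T|})=O(\sqrt{\log n})$, not $O(\sqrt{\log d})$; the latter needs the Gram--Schmidt walk.) Finally, the threshold $n=\omega((d^5\log d)^{1/4})$ is asserted to emerge from your balancing of $2^{|S|}$ against net size over small-ball probability, but no calculation is given, and it is not clear that this route produces that particular exponent.

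The paper's argument is structurally different. There is no column split: one works on the full matrix with a \emph{structured} distribution $\D$ on $\{-1,1\}^n$ (a truncated Gram--Schmidt walk) that simultaneously forces $\|M\bx\|_2=\Theta(\sqrt d)$, $\|M\bx\|_\infty=O(\sqrt{\log d})$, and makes distinct samples nearly antipodal in Hamming distance and $M$-uncorrelated. One then applies Paley--Zygmund to the $\D$-weighted count of $\bx$ achieving $\|(M+R/\sqrt d)\bx\|_\infty\le d^{-1/2}$. Because rows of $R$ are independent, $P_\bx$ factors into $d$ near-central binomial point masses; an exact two-vector formula yields $P_{\bx,\by}$, and the antipodal/uncorrelated properties force $P_{\bx,\by}\le(1+o(1))P_\bx P_\by$. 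The threshold $n=\omega((d^5\log d)^{1/4})$ arises precisely from the requirement $\sum_i|t_i^\bx|^3=O(d^{5/2}\sqrt{\log d})=o(n^2)$ needed to suppress the cubic error in the Stirling expansion of $\binom{n}{(n+t)/2}$---a concrete source you would need to reproduce, not infer.
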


\noindent
This resolves the aforementioned question of Bansal, Jiang, Meka, Singla, and Sinha~\cite[Section~3]{BJMSS} in the case that the smoothing noise has the Rademacher distribution. In the case that the noise has the Bernoulli distribution, the aforementioned dependency $n = \omega(d^2)$, stated in~\cite[Section~3]{BJMSS}, is the state of the art. 




\begin{remark}\label{rm:normalisation}{\bf Normalisation factor - lower bound.} 
{\em In Theorem~\ref{thm:main}, the Rademacher matrix $R$ is normalised by a $\sqrt{d}$ factor. This normalisation factor is warranted. Indeed, requiring that $\|\bv\|_2 \leq 1$ holds for every column $\bv$ of the random perturbation is a natural constraint to impose, for such a restriction guarantees that the columns of the perturbation do not dominate the columns of $M$. Writing $k := k(d)$ to denote the normalisation factor and letting $\bv$ be any column vector of $R/k$, we see that $1 \geq \|\bv\|_2^2 = \sum_{i=1}^d \frac{1}{k^2} = \frac{d}{k^2}$ implies $k \geq \sqrt{d}$. }
\end{remark}


\begin{remark}\label{rm:normalisation-2}{\bf Normalisation factor - upper bound.} 
{\em Let $k$ be as defined in Remark~\ref{rm:normalisation}. Enlarging $k$ is of interest as this reduces the dominance of the random perturbation further, allowing one to come ever closer to Koml\'os' conjecture. Alas, in the setting of Theorem~\ref{thm:main}, there is an upper bound on the normalisation factor $k$. To see this, note that given $k$ and a discrepancy bound $\Delta$, the stipulation that $\DISC(M+R/k) \leq \Delta$ is equivalent to requiring the existence of a vector $\bx \in \{-1,1\}^n$ for which 
\begin{equation}\label{eq:rad-sum}
(R\bx)_i \in \left[- k (M\bx)_i - k \Delta, - k (M\bx)_i + k \Delta \right]
\end{equation}
holds for every $i \in [d]$. Given $\bx \in \{-1, 1\}^n$ and $i \in [d]$, the term $(R \bx)_i$ has the same distribution as the sum $\sum_{i=1}^n r_i$, whose summands are independent Rademacher random variables. As such, $(R\bx)_i \in [- \omega(\sqrt{n}), \omega(\sqrt{n})]$ asymptotically almost surely. Consequently, a prerequisite for~\eqref{eq:rad-sum} holding a.a.s. is that  
$$
\left[- k (M\bx)_i - k \Delta, - k (M\bx)_i + k \Delta \right] \cap [- \omega(\sqrt{n}), \omega(\sqrt{n})] \neq \emptyset
$$ 
holds for every $i \in [d]$. 
Assuming that $\Delta$ is relatively small (as one naturally aims to have), the latter amounts to essentially requiring that $k \leq \sqrt{n}\|M\bx\|_\infty^{-1}$. The smaller the value of $\|M\bx\|_\infty$ we obtain, the less restrictive on $k$ this inequality becomes. In our current state of knowledge, the best we can ensure are vectors $\bx \in \{-1,1\}^n$ for which $\|M\bx\|_\infty = O(\sqrt{\log d})$ (see Lemma~\ref{lem:concentration}(ii)). Such a vector then yields the upper bound $k = O\left(\sqrt{n/\log d}\right)$. It follows that for $n = \omega(d \log d)$ (as in the premise of Theorem~\ref{thm:main}), taking $k$ to be roughly $\sqrt{d}$ is essentially best possible.
}
\end{remark}

\begin{remark}{\bf Dependence between $\boldsymbol{n}$ and $\boldsymbol{d}$.} 
{\em The requirement $n = \omega(d \log d)$ appearing in Theorem~\ref{thm:main} is asymptotically best possible. To see this, take $M$ to be the zero matrix (which is Koml\'os) and note that it suffices to prove that $\DISC(R) = O(1)$ mandates $n = \omega(d \log d)$.  To this end, fix an arbitrary vector $\bx \in \{-1, 1\}^n$. Given any constant $C>0$ and a row $\br$ of the Rademacher matrix $R \in \{-1, 1\}^{d \times n}$, we may write 
$$
\Pr\left[ |\langle \br,\bx \rangle| \leq C \right] = \sum_{k=-C/2}^{C/2} \frac{\binom{n}{n/2+k}}{2^n} = O \left(n^{- 1/2} \right),
$$
where the first equality holds since $\langle \br, \bx \rangle$ has the same distribution as a sum of i.i.d. Rademacher random variables, and the second equality is  supported by Proposition~\ref{prop:Spencer} below. It follows by the independence of the entries of $R$ that $\Pr[\|R \bx\|_\infty \leq C] = O \left(n^{-d/2} \right)$. Call a vector $\bx \in \{-1, 1\}^n$ \emph{good} if $\|R\bx\|_\infty \leq C$. Then
$$
\Ex[\text{number of good vectors}] = O \left( 2^n n^{-d/2} \right) = O \left(\exp(n - d\log n/2) \right).
$$ 
If $n = o(d \log d)$, then the above expectation vanishes and it thus follows by Markov's inequality that a.a.s. no good vectors exist. 
}
\end{remark}

\subsection{Our approach} 

A natural approach towards proving Theorem~\ref{thm:main} is to use the CLT in conjunction with the work of Bansal, Jiang, Meka, Singla, and Sinha~\cite{BJMSS} for the case of Gaussian noise. Our attempts to employ this approach stalled at the sub-optimal dependency $n = \omega(d^2)$; the same bound attained in~\cite{BJMSS} for Bernoulli noise. To break the $n = \omega(d^2)$ barrier and reach the optimal dependency $n = \omega(d \log d)$, we build upon the framework of Bansal, Jiang, Meka, Singla, and Sinha~\cite{BJMSS} in a strong way. In that, we provide a discretisation of their argument without appealing to the CLT. This calls for  a meticulous counting argument whose execution requires new ideas and various adaptations of the argument of~\cite{BJMSS}. 

In the Gaussian case, Bansal, Jiang, Meka, Singla, and Sinha~\cite{BJMSS} equip $\{-1,1\}^n$ with an appropriate distribution (see~\cite[Lemma~2.1]{BJMSS} and also Lemma~\ref{lem:concentration}) from which a vector $\bx \in \{-1,1\}^n$ satisfying $\|(M + R)\bx\|_\infty \leq 1/\mathrm{poly}(d)$ is identified, where here $R$ is a (scaled) conformal Gaussian matrix. Writing $S$ for the number of vectors $\bx$ satisfying the above bound, they employ the Paley-Zygmund inequality (see~\eqref{eq:Paley-Zyg}) in order to prove that $\Pr[S >0] \geq 1 - o(1)$. The bulk of the argument consists of establishing that $\Ex_R[S^2] \leq (1+o(1))\Ex_R[S]^2$. This is also the core of our proof of Theorem~\ref{thm:main}. 

Executing the above approach with Rademacher noise, summons various challenges. Roughly put, in~\cite{BJMSS} one encounters the need to estimate the probability that a gaussian vector lies, say, within some rectangular region. In the gaussian case, this can be handled through a nontrivial estimation (as seen in~\cite{BJMSS}) of a certain integral involving the probability density function of an adequate multi-dimensional gaussian distribution. A similar situation in the Rademacher case, is significantly more involved and, in order to not lose track of the optimal dependency $n = \omega(d \log d)$, requires a careful counting argument. The need to take on such estimations abound throughout the proof. Moreover, such changes mandate various subtle adjustments to earlier parts of the argument. For example, we require an adaption of the original aforementioned distribution defined in~\cite[Lemma~2.1]{BJMSS} (see Lemma~\ref{lem:concentrationS}).





\section{Preliminaries}

This section is divided into two subsections, both containing auxiliary results facilitating our proof of Theorem~\ref{thm:main}.

\begin{remark} \label{rem::BCsymmetry}
{\em Throughout this section we encounter binomial coefficients of the form $\binom{n}{n/2+t}$, where $n \in \mathbb{N}$ is even and $t \in \mathbb{Z}$. Owing to the symmetry $\binom{n}{n/2+t} = \binom{n}{n/2-t}$, whenever it is convenient, we assume that $t \geq 0$.}
\end{remark}



\subsection{Key tools} \label{sec:Spencer}


Our overall goal in Theorem~\ref{thm:main} is to prove the existence of a vector $\bx \in \{-1, 1\}^n$ for which a.a.s. $\|(M + R/\sqrt{d}) \bx\|_\infty \leq 8 d^{-1/2}$ holds. In order to do so we follow the core innovative technique put forth by Bansal, Jiang, Meka, Singla, and Sinha~\cite{BJMSS} and sample the vectors of $\{-1, 1\}^n$ according to an adaptation of a distribution $\D := \D_n$, called the {\em truncated Gram-Schmidt distribution}, defined below in Lemma~\ref{lem:concentration}.

A real random variable $X$ is said to be $\alpha$-{\em subgaussian}\footnote{Subgaussian random variables admit several equivalent characterisations; see, e.g.,~\cite[Proposition~2.5.2]{Vershynin} for details.} if it satisfies $\Pr[|X| \geq t] \leq 2 \exp(-(t/\alpha)^2)$ for every $t > 0$. A random vector $\bx \in \mathbb{R}^n$ is said to be $\alpha$-{\em subgaussian} if $\la \bx,\by \ra$ is $\alpha$-subgaussian for every $\by \in \mathbb{S}^{n-1}$, see, e.g.,~\cite[Definition~3.4.1]{Vershynin}. The following is one of the main results of~\cite{HSSZ19}. 

\begin{theorem}\label{thm:Gram-Schmidt}{\em~\cite{HSSZ19}}
Let $\bv^{(1)},\ldots,\bv^{(n)} \in \mathbb{R}^m$ satisfy $\|\bv^{(i)}\|_2 \leq 1$ for every $i \in [n]$. Applying the Gram-Schmidt walk sampling algorithm\footnote{See~\cite{HSSZ19} for details.} over the given vectors outputs a random vector $\bx \in \{-1,1\}^n$ such that the vector $\sum_{i=1}^n \bx_i \bv^{(i)}$ is $1$-subgaussian. 
\end{theorem}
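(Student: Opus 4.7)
My plan is to exploit the martingale structure of the Gram--Schmidt walk. The algorithm maintains a fractional iterate $\by^{(t)} \in [-1,1]^n$ starting from the origin; at each step it designates a pivot coordinate, computes a step direction $\bu^{(t)}$ by projecting the pivot-indicator vector onto the subspace obtained by Gram--Schmidt orthogonalising against the input vectors indexed by already-frozen coordinates, and then sets $\by^{(t+1)} := \by^{(t)} + \delta^{(t)} \bu^{(t)}$, where $\delta^{(t)}$ is a mean-zero random variable whose magnitude is chosen as large as possible subject to the iterate remaining in $[-1,1]^n$. A standard argument confirms that the walk terminates in at most $n$ steps, with a coordinate freezing to $\pm 1$ at each iteration, so that the output $\bx$ lies in $\{-1,1\}^n$ almost surely.

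Fix a test direction $\by \in \mathbb{S}^{n-1}$ and let $V \in \mathbb{R}^{n \times n}$ be the matrix with columns $\bv^{(1)},\ldots,\bv^{(n)}$. The scalar process $S_t := \la \by, V \by^{(t)} \ra$ is a martingale with respect to the walk's filtration, with increments $\delta^{(t)} \la \by, V \bu^{(t)} \ra$. The aim is to show that $S_T = \la \by, V\bx \ra$ is $1$-subgaussian, and the natural route is an exponential martingale estimate (Azuma--Hoeffding or Freedman) once one has the almost sure quadratic-variation bound
\[
\sum_{t=1}^{T} (\delta^{(t)})^2 \, \la \by, V\bu^{(t)} \ra^2 \;\leq\; 1.
\]
The Gram--Schmidt construction is designed precisely so that the vectors $V \bu^{(t)}$ enjoy an orthogonality property relative to previously frozen coordinates, and so that the magnitudes $|\delta^{(t)}|$ are the coefficients expressing an appropriately normalised target in this orthogonal basis; thus the sum above should telescope into the squared length of a unit-norm projection and hence be at most $1$.

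The main obstacle is obtaining the sharp subgaussian constant of $1$ rather than one scaling with $n$. A naive Cauchy--Schwarz combined with $\|\by^{(t)}\|_\infty \leq 1$ only yields $\sum_t (\delta^{(t)})^2 \|V\bu^{(t)}\|_2^2 = O(n)$, which gives a useless $O(\sqrt{n})$-subgaussian parameter. To pin the constant at $1$, the plan is to track a carefully chosen quadratic potential, for example $\Phi_t := \la \by, V(\bz^\ast - \by^{(t)}) \ra^2$ against a fixed target $\bz^\ast$ depending only on $\by$, and to show that each step of the walk decreases this potential by \emph{exactly} the conditional second moment of the step's contribution to $S_T$. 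Pairing this pathwise variance bound with a concentration inequality for martingales with bounded quadratic variation (supplemented by the bounded-increment estimate coming from $|\delta^{(t)}| \leq 2$ and $\|V\bu^{(t)}\|_2 \leq 1$) should then yield $1$-subgaussianity in every direction $\by \in \mathbb{S}^{n-1}$, which is exactly the statement of the theorem.
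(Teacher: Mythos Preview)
The paper does not prove this theorem at all: it is quoted verbatim as a black box from~\cite{HSSZ19} (``The following is one of the main results of~\cite{HSSZ19}''), and the paper only ever \emph{uses} the conclusion, via Lemma~\ref{lem:concentration}. So there is no ``paper's own proof'' to compare against; your proposal is an attempt to reprove an imported result that the authors deliberately did not reprove.

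As for the content of your sketch, it is broadly faithful to the argument in the cited source. The proof there does proceed via the martingale structure of the fractional walk and does hinge on a pathwise quadratic-variation bound of the form $\sum_t (\delta^{(t)})^2 \la \by, V\bu^{(t)}\ra^2 \leq 1$, obtained from the orthogonality built into the Gram--Schmidt step. Two caveats, though. First, your description of the step direction is slightly off: $\bu^{(t)}$ is not obtained by projecting the pivot indicator onto a subspace spanned by \emph{input vectors indexed by frozen coordinates}; rather, one projects in the space of the \emph{alive} coordinates so that $V\bu^{(t)}$ is aligned with the part of the pivot's column orthogonal to the remaining alive columns. Getting this detail right is exactly what makes the variance telescope. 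Second, the sharp constant $1$ is not obtained by Azuma--Hoeffding or Freedman directly (those would lose a constant factor); the original proof bounds the moment generating function stepwise using that each $\delta^{(t)}$ is a two-point mean-zero random variable, which gives the exact subgaussian constant equal to the total quadratic variation. Your potential $\Phi_t$ idea gestures at this but would need to be made precise to recover the constant $1$ rather than $O(1)$.
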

 
The distribution (implicitly) defined in Theorem~\ref{thm:Gram-Schmidt} is {\sl truncated} in~\cite{BJMSS} so as to produce the following distribution over the vectors in $\{-1,1\}^n$.

\begin{lemma} \label{lem:concentration} {\em~\cite[Lemma~2.1]{BJMSS}}
Let $M \in \mathbb{R}^{d \times n}$ be a Koml\'os matrix. Then, there exists a constant $C_{\ref{lem:concentration}} > 0$ as well as a distribution $\D := \D_
n$, set over the vectors in $\{-1,1\}^n$, such that the following three properties hold simultaneously.
\begin{enumerate}
\item [$(i)$] $\|M \bx\|_2 \in \left[r - \mathcal{T}, r + \mathcal{T} \right]$ holds for every $\bx \in \supp \D$, where $r = O(\sqrt{d})$ and $\mathcal{T} = d^{-C'}$ for some constant $C' > 1$.

\item [$(ii)$] $\|M \bx\|_{\infty} = O \left(\sqrt{\log d} \right)$ holds for every $\bx \in \supp \D$.

\item [$(iii)$] $\Pr_{\bx \sim \D} \left[ |\la \bx, \bu \ra | \geq t \right] \leq d^{C_{\ref{lem:concentration}}}\exp(-t^2/8)$ and $\Pr_{\bx \sim \D} \left[ |\la M\bx, \bv \ra | \geq t \right] \leq d^{C_{\ref{lem:concentration}}}\exp(-t^2/8)$
both hold whenever $\bu \in \mathbb{S}^{n-1}$, $\bv \in \mathbb{S}^{d-1}$, and $t > 0$.
\end{enumerate}
\end{lemma}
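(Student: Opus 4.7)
The plan is to apply the Gram-Schmidt walk of Theorem~\ref{thm:Gram-Schmidt} to an appropriately augmented matrix so as to produce a distribution with the required subgaussian tails, and then truncate that distribution to enforce the $2$-norm condition of the first conclusion.

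Concretely, first I would apply Theorem~\ref{thm:Gram-Schmidt} not to the columns of $M$ but to the columns of the augmented matrix $\tilde M := \begin{pmatrix} M \\ I_n/\sqrt 2 \end{pmatrix}$, after a global rescaling so that each column of $\tilde M$ has $2$-norm at most $1$ (before rescaling, each column has norm at most $\sqrt{3/2}$). The theorem outputs a random $\bx \in \{-1,1\}^n$ such that $\tilde M \bx \in \mathbb{R}^{d+n}$ is $O(1)$-subgaussian. Projecting onto the top $d$ coordinates yields that $M\bx$ is $O(1)$-subgaussian; projecting onto the bottom $n$ coordinates (and undoing the $1/\sqrt 2$ factor) yields that $\bx$ itself is $O(1)$-subgaussian. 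After absorbing absolute constants into the exponent, this produces a distribution $\D_0$ on $\{-1,1\}^n$ satisfying the tail bound $2\exp(-t^2/8)$ for both $\la \bx, \bu\ra$ with $\bu \in \mathbb{S}^{n-1}$ and $\la M\bx, \bv\ra$ with $\bv \in \mathbb{S}^{d-1}$, matching the second conclusion of the lemma modulo the $d^{C_{\ref{lem:concentration}}}$ prefactor.

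Next, I would analyse $\|M\bx\|_2$ under $\D_0$. Subgaussianity of the coordinates of $M\bx$ gives $\Ex_{\bx \sim \D_0}\|M\bx\|_2^2 = O(d)$, and a Hanson-Wright style inequality applied to the quadratic form $\bx^T M^T M \bx$ concentrates $\|M\bx\|_2^2$ around its expectation. The matching lower bound $\Ex\|M\bx\|_2^2 = \Omega(d)$ follows from the covariance structure of the Gram-Schmidt walk, which guarantees that $\Ex[\bx \bx^T]$ is not overly collapsed in the directions spanned by the columns of $M$. Combined, these give $\|M\bx\|_2 = \Theta(\sqrt d)$ with some probability $p := p(d) \geq d^{-O(1)}$ under $\D_0$.

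Finally, I would take $\D$ to be the conditional distribution of $\D_0$ on the event $E := \{\|M\bx\|_2 = \Theta(\sqrt d)\}$. Item~(1) then holds by construction, and item~(2) follows from
\[
\Pr_{\bx \sim \D}[A] \;=\; \frac{\Pr_{\bx \sim \D_0}[A \cap E]}{\Pr_{\bx \sim \D_0}[E]} \;\leq\; p^{-1} \Pr_{\bx \sim \D_0}[A] \;\leq\; d^{O(1)} \Pr_{\bx \sim \D_0}[A]
\]
applied to the two relevant events, with $C_{\ref{lem:concentration}}$ chosen large enough to absorb the $p^{-1}$ factor. The hard part will be the lower bound step above: whereas $\|M\bx\|_2 = O(\sqrt d)$ with high probability is a routine consequence of subgaussianity, guaranteeing $\|M\bx\|_2 = \Omega(\sqrt d)$ on a set of probability at least $d^{-O(1)}$ uniformly over Koml\'os matrices $M$ requires careful use of the Gram-Schmidt walk's covariance identity together with a Paley-Zygmund type second-moment argument, and is most delicate when the columns of $M$ are highly non-uniform in norm or when the column space of $M$ is substantially lower-dimensional than $d$.
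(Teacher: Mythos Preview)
The paper does not prove this lemma; it is quoted verbatim from~\cite[Lemma~2.1]{Bansal22} and used as a black box. So there is no ``paper's own proof'' to compare against, and your sketch should be read as an attempted reconstruction of the argument in the cited reference rather than of anything in the present paper.

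That said, your high-level plan---run the Gram-Schmidt walk on an augmented matrix to get simultaneous subgaussianity of $\bx$ and $M\bx$, then condition on the event $\|M\bx\|_2 = \Theta(\sqrt d)$ and absorb the inverse probability into the $d^{C}$ prefactor---is indeed the shape of the argument in~\cite{Bansal22}. The augmentation $\tilde M = \begin{pmatrix} M \\ I_n/\sqrt{2}\end{pmatrix}$ is exactly right for item~(2). Where your sketch is genuinely incomplete is the step you yourself flag as hardest: the lower bound $\|M\bx\|_2 = \Omega(\sqrt d)$ with probability $\geq d^{-O(1)}$. As you have written it, this cannot hold uniformly over Koml\'os matrices (take $M=0$, or more generally any $M$ whose columns have total squared norm $o(d)$). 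The actual argument in~\cite{Bansal22} does not try to prove this directly from ``the covariance structure of the Gram-Schmidt walk'' on $M$ itself; rather, one first pads $M$ with $\Theta(d)$ additional standard-basis columns (or an equivalent device) so that \emph{every} sign vector $\bx$ automatically satisfies $\|M'\bx\|_2 = \Omega(\sqrt d)$ deterministically, and the upper bound $O(\sqrt d)$ is then the only probabilistic step. Your Hanson--Wright/Paley--Zygmund route for the lower bound, without such a padding, does not go through in general.
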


\begin{remark} \label{rem::infinityNorm}
{\em Part (ii) of Lemma~\ref{lem:concentration} is not stated in~\cite{BJMSS}. Its proof being essentially the same as the proof of Lemma~\ref{lem:concentration}(i) in~\cite{BJMSS} is thus omitted}.
\end{remark}

We require certain extensions of the distribution $\D$. Given a non-negative integer $k$ and an injective mapping $\varphi : \{-1,1\}^n \to \{-1,1\}^{n+k}$, a distribution $\cS$ over $\{-1,1\}^{n+k}$ is said to be a {\em deterministic $\varphi$-extension} of $\D_n$ if the latter can be obtained by first sampling a vector $\bx \sim \D_n$ and then applying $\phi$ to $\bx$. If the specific nature of the mapping $\varphi$ is inconsequential, then we simply say that $\cS$ forms a {\sl deterministic extension} of $\D_n$. The following is an adaptation of Lemma~\ref{lem:concentration}, applicable to $\cS$.

\begin{lemma} \label{lem:concentrationS}
Let $i \in \{1,2\}$ and let $M \in \mathbb{R}^{d \times n}$ be a Koml\'os matrix whose last $i$ columns form the zero vector $\bold{0}$. Let $\cS$ be a distribution over $\{-1,1\}^n$ which forms a deterministic extension of $\D_{n-i}$. Then, there exists a constant $C_{\ref{lem:concentrationS}} > 0$ such that the following three properties hold simultaneously.
\begin{enumerate}
\item [$(i)$] $\|M \bx\|_2 \in \left[r - \mathcal{T}, r + \mathcal{T} \right]$ holds for every $\bx \in \supp \cS$, where $r = O(\sqrt{d})$ and $\mathcal{T} = d^{-C'}$ for some constant $C' > 1$.

\item [$(ii)$] $\|M \bx\|_{\infty} = O \left(\sqrt{\log d} \right)$ holds for every $\bx \in \supp \cS$.

\item [$(iii)$] $\Pr_{\bx \sim \cS} \left[ |\la \bx, \bu \ra | \geq t \right] \leq d^{C_{\ref{lem:concentrationS}}} \exp(-t^2/9)$ and $\Pr_{\bx \sim \cS} \left[ |\la M\bx, \bv \ra | \geq t \right] \leq d^{C_{\ref{lem:concentrationS}}}\exp(-t^2/8)$
both hold whenever $\bu \in \mathbb{S}^{n-1}$, $\bv \in \mathbb{S}^{d-1}$, and $t > 0$.
\end{enumerate}
\end{lemma}

\begin{proof}
Since the last $i$ columns of $M$ are $\bold{0}$, parts (i) and (ii) and the fact that $\Pr_{\bx \sim \cS} \left[ |\la M\bx, \bv \ra | \geq t \right] \leq d^{C_{\ref{lem:concentrationS}}} \exp(-t^2/8)$ holds for every $\bv \in \mathbb{S}^{d-1}$ and every $t > 0$, are immediate corollaries of their counterparts in Lemma~\ref{lem:concentration}.

Fix $t > 0$ and an arbitrary vector $\bu \in \mathbb{S}^{n-1}$. For every vector $\bv \in \mathbb{R}^n$, let $\bv^{(i)} \in \mathbb{R}^{n-i}$ be the vector consisting of the first $n-i$ coordinates of $\bv$. Then 
\begin{align*}
\Pr_{\bx \sim \cS} \left[ |\la \bx, \bu \ra | \geq t \right] &\leq \Pr_{\bx^{(i)} \sim \D_{n-i}} \left[ \left| \left\la \bx^{(i)}, \bu^{(i)} \right\ra \right| \geq t - 2 \right] \\
&= \Pr_{\bx^{(i)} \sim \D_{n-i}} \left[ \left| \left\la \bx^{(i)}, \|\bu^{(i)}\|_2^{-1} \bu^{(i)} \right\ra \right| \geq (t - 2) \|\bu^{(i)}\|_2^{-1} \right] \\
&\leq d^{C_{\ref{lem:concentration}} + 1} \exp \left(- (t-2)^2/8 \right) \\
&\leq d^{C_{\ref{lem:concentrationS}}} \exp \left(- t^2/9 \right),
\end{align*}
where the first inequality holds since $\bx \in \{-1,1\}^n$ and $\bu \in \mathbb{S}^{n-1}$, and the second inequality holds by Lemma~\ref{lem:concentration}(iii) and since $\|\bu^{(i)}\|_2 \leq \|\bu\|_2 = 1$ (multiplying by $d$ is used to circumvent the case $t \leq 2$; additionally, if $\|\bu^{(i)}\|_2 = 0$, then the claim is trivial).
\end{proof}

Given two vectors $\bx,\by \in \{-1,1\}^n$, let $\varepsilon := \varepsilon(\bx, \by) = \frac{\langle \bx, \by \rangle}{n}$, let $\Diff(\bx,\by) = \{i \in [n] : \bx_i \neq \by_i\}$, and let $\alpha := \alpha(\bx,\by) = 1 - \frac{|\Diff(\bx,\by)|}{n}$. Note that $|\Diff(\bx, \by)|$ is the Hamming distance between $\bx$ and $\by$, and  $\alpha(\bx,\by) n$ is the number of indices over which these two vectors coincide. The following result presents simple but useful relations between these parameters.

\begin{claim}
\label{claim:abc}
    Let $\bx,\by \in \{-1,1\}^n$ and let $\varepsilon := \varepsilon(\bx, \by)$ and $\alpha := \alpha(\bx,\by)$ be as above. Then
    \begin{enumerate}
        \item [$(a)$] $\alpha=\frac{1+\epsilon}{2}$;
        \item [$(b)$] $\frac{1}{\alpha(1-\alpha)} = \frac{4}{1-\epsilon^2} \leq 4 \exp \left(2\epsilon^2 \right)$, where the inequality holds provided that $|\varepsilon| \leq 1/2$.
    \end{enumerate}
\end{claim}

\begin{proof}
    Starting with (a), note that
    \begin{align*}
        \langle \bx, \by \rangle &=
        \sum_{i\in [n] \backslash \Diff(\bx,\by)} \bx_i \by_i
        +\sum_{i\in\Diff(\bx,\by)} \bx_i \by_i
        = (n - |\Diff(\bx,\by)|) - |\Diff(\bx,\by)| = n - 2 |\Diff(\bx,\by)|.
    \end{align*}
    It thus follows that
    $\frac{|\Diff(\bx,\by)|}{n}=\frac{n-\langle \bx,\by \rangle}{2n}= \frac{1-\epsilon}{2}$.
    Hence
    $$
        \alpha = 1 - \frac{|\Diff(\bx,\by)|}{n} =1 - \frac{1-\epsilon}{2}
        = \frac{1 + \epsilon}{2}.
    $$
    Next, we prove (b). Using (a) we obtain
    \begin{align*}
        \frac{1}{\alpha(1-\alpha)} &=
        \frac{1}{\frac{1+\epsilon}{2} \cdot \frac{1-\epsilon}{2}}
        =
        \frac{4}{(1+\epsilon)(1-\epsilon)}
        =
        \frac{4}{1-\epsilon^2}.
    \end{align*}
    Assume now that $|\varepsilon| \leq 1/2$. Since $1-x \geq \exp(-2x)$ holds whenever $0 \leq x \leq 1/2$, it follows that
    $$
        \frac{4}{1-\epsilon^2} \leq \frac{4}{\exp\left(-2\epsilon^2\right)} = 4\exp\left(2\epsilon^2\right).
    $$
\end{proof}

A key tool in our approach is the following approximation result for binomial coefficients $\binom{n}{k}$, where $k$ is ``close'' to $n/2$. 

\begin{proposition}\label{prop:Spencer}
Let $n$ be a sufficiently large even integer and let $t \in \mathbb{Z}$ be such that $|t| = o(n)$ and $\frac{n+t}{2} \in \mathbb{Z}$. Then, 
\begin{equation}\label{eq:Spencer}
 \binom{n}{\frac{n+t}{2}} = (1+o_n(1))\sqrt{\frac{2}{\pi n}} \cdot 2^n \exp{\left(- \frac{t^2}{2n} + \Theta\left(\frac{t^3}{n^2}\right) + o \left(\frac{t}{n} \right)\right)}.
\end{equation}
\end{proposition}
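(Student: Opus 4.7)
The plan is to express the ratio $\binom{n}{(n+t)/2}/\binom{n}{n/2}$ as a telescoping product, take logarithms, and Taylor expand. By Remark~\ref{rem::BCsymmetry}, I may assume $t \geq 0$; since $n$ is even and $(n+t)/2 \in \mathbb{Z}$, the integer $t$ is also even, so I write $t = 2k$ with $k = o(n)$.

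The starting identity is
\[
\frac{\binom{n}{(n+t)/2}}{\binom{n}{n/2}} \;=\; \prod_{i=1}^{k} \frac{n/2 - i + 1}{n/2 + i} \;=\; \prod_{i=1}^{k} \frac{1 - 2(i-1)/n}{1 + 2i/n},
\]
obtained by cancelling $n!$ and pairing the factors of $((n+t)/2)!/(n/2)!$ with those of $(n/2)!/((n-t)/2)!$. Taking logarithms, each factor contributes $\log(1 - a_i) - \log(1 + b_i)$, where $a_i := 2(i-1)/n$ and $b_i := 2i/n$. Since $a_i, b_i = O(k/n) = o(1)$, the Taylor expansions $\log(1-a) = -a + O(a^2)$ and $\log(1+b) = b + O(b^2)$ are valid and yield
\[
\log(1-a_i) - \log(1+b_i) \;=\; -(a_i + b_i) + O\!\left(\frac{i^2}{n^2}\right).
\]

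Summing, the linear part contributes $-\sum_{i=1}^{k}(4i-2)/n = -2k^2/n = -t^2/(2n)$, which is the main term. The quadratic remainder sums to $\sum_{i=1}^{k} O(i^2/n^2) = O(k^3/n^2) = O(t^3/n^2)$. A sharper accounting reveals that the dominant sub-leading piece is in fact $\sum_i (b_i^2 - a_i^2)/2 = t^2/(2n^2)$, which equals $o(1)\cdot t^2/n$ under the hypothesis $t = o(n)$, so it can be absorbed into the $o(1)$ coefficient of the main term. Exponentiating then yields~\eqref{eq:Spencer}.

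The only substantive work is bookkeeping of error terms, ensuring that every sub-leading contribution either refines the coefficient of $t^2/n$ (through the $o(1)$ correction) or fits inside the $O(t^3/n^2)$ remainder; the assumption $|t| = o(n)$ is exactly what guarantees this, since each successive Taylor order is dominated by the previous once $t/n = o(1)$. An alternative route applies Stirling's formula directly to $n!$, $((n+t)/2)!$ and $((n-t)/2)!$, reducing the problem to expanding the function $f(u) := (1+u)\ln(1+u) + (1-u)\ln(1-u) = u^2 + u^4/6 + O(u^6)$ at $u = t/n$; this yields the same conclusion more compactly, but the telescoping product keeps the error structure more transparent.
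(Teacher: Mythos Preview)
Your proposal is correct and follows essentially the same route as the paper: both write the ratio as the telescoping product $\prod_{j=1}^{t/2}\frac{n/2-j+1}{n/2+j}$, take logarithms, Taylor expand, and sum. The only cosmetic difference is that the paper combines each factor into a single $1-\frac{4j-2}{n+2j}$ before expanding (which directly yields a $\Theta(j^2/n^2)$ error per term, summing to $\Theta(t^3/n^2)$), whereas you expand numerator and denominator separately; your split produces a partial second-order cancellation, giving the slightly sharper remainder $t^2/(2n^2)+O(t^4/n^3)$ that you then absorb into the $o(1)$ coefficient. Either way the stated estimate follows, and the paper only ever uses the upper bound on the $\Theta(t^3/n^2)$ term, so your tighter control is harmless.
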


\begin{remark}
{\em Up to small modifications, Proposition~\ref{prop:Spencer} and its proof can be found in~\cite[Section~5.4]{Spencer}; we include the proposition and its proof here as these modifications are important for our purposes}. 
\end{remark}

\begin{proofof}{Proposition~\ref{prop:Spencer}}
Let
$$
Q = \binom{n}{\frac{n+t}{2}}/\binom{n}{n/2} = \frac{(n/2)! (n/2)!}{\left(\frac{n+t}{2}\right)! \left(\frac{n-t}{2}\right)!} = \prod_{j=1}^{t/2} \frac{n/2 - j+1}{n/2+j}.
$$
Therefore
\begin{equation}\label{eq:logQ-1}
\log Q = \sum_{j=1}^{t/2} \log \left(1 - \frac{4j-2}{n+2j}\right) = \sum_{j=1}^{t/2} \left[- \frac{4j-2}{n+2j} + \Theta\left(\frac{j^2}{n^2}\right) \right],
\end{equation}
where for the last equality we use the expansion $\log (1-x) = -x + \Theta(x^2)$, holding whenever $x \in (0,1)$. 
Substituting the identity 
$$
\frac{4j-2}{n+2j} = \frac{4j}{n} - \frac{8j^2}{n(n+2j)} - \frac{2}{n+2j} = \frac{4j}{n} - \frac{2}{n+2j} + \Theta \left(\frac{j^2}{n^2} \right)
$$
into~\eqref{eq:logQ-1} yields
\begin{equation}\label{eq:logQ-2}
\log Q = -\sum_{j=1}^{t/2} \frac{4j}{n} +\sum_{j=1}^{t/2} \frac{2}{n+2j}+ \sum_{j=1}^{t/2}\Theta\left( \frac{j^2}{n^2}\right) 
= -\frac{t}{n} - \frac{t^2}{2n} +  \sum_{j=1}^{t/2} \frac{2}{n+2j}+ \Theta(t^3/n^2),
\end{equation}
where for the last equality we employ the identity $\sum_{i=1}^k i = k(k+1)/2$ and the estimate $\sum_{i=1}^k i^2 = \Theta(k^3)$. 

The sum appearing on the right hand side of~\eqref{eq:logQ-2} satisfies
\begin{equation*} 
\frac{t}{n+t} = \sum_{j=1}^{t/2} \frac{2}{n+t} \leq \sum_{j=1}^{t/2} \frac{2}{n+2j} \leq \sum_{j=1}^{t/2} \frac{2}{n} = \frac{t}{n}. 
\end{equation*}
Since $t = o(n)$, it follows that
\begin{equation} \label{eq::essentiallyt/n}
\sum_{j=1}^{t/2} \frac{2}{n+2j} = (1 + o(1)) t/n = t/n + o(t/n).
\end{equation}

Combining~\eqref{eq:logQ-2} and~\eqref{eq::essentiallyt/n} then implies that
$$
\log Q = - \frac{t^2}{2n} + \Theta\left(\frac{t^3}{n^2}\right) + o \left(\frac{t}{n} \right).
$$
The claim follows since 
$$
\binom{n}{n/2} = \left(1+o_n(1)\right) \sqrt{\frac{2}{\pi n}}\cdot 2^n
$$
holds by a straightforward application of Stirling's approximation\footnote{Use $\sqrt{2 \pi n} \left(\frac{n}{e}\right)^ne^{1/(12n+1)} \leq  n! < \sqrt{2 \pi n} \left(\frac{n}{e}\right)^ne^{1/12n}$.}.
\end{proofof}

\begin{lemma} \label{lem::choosesX}
Let $i \in \{1,2\}$ and let $M \in \mathbb{R}^{d \times n}$ be a Koml\'os matrix whose last $i$ columns are $\bold{0}$. Let $\cS$ be a distribution over $\{-1,1\}^n$ which forms a deterministic extension of $\D_{n-i}$. Then, for every $\bx \in \supp \cS$, there exists a vector $s^{\bx} = (s^{\bx}_1, \ldots s^{\bx}_d)$ which satisfies the following three properties.
\begin{enumerate}
\item [$(1)$] $s^{\bx}_i \in [-4, 4]$ for every $i \in [d]$;

\item [$(2)$] $- \sqrt{d} (M \bx)_i + s^{\bx}_i \equiv n \pmod 4$ for every $i \in [d]$;

\item [$(3)$] $\left|\sum_{i=1}^d s^{\bx}_i (M \bx)_i \right| = O(\sqrt{\log d})$.
\end{enumerate}
\end{lemma}

\begin{proof}
Note first that for every positive integer $n$ and any real number $a$ there exists a unique real number $a' \in [0, 4)$ such that $a + a' \equiv n \pmod 4$; let $f_n : \mathbb{R} \to \mathbb{R}$ be defined by $f(a) = a'$. Given any $\bx \in \cS$ we determine the coordinates of $s^{\bx}$ sequentially. Set $s^{\bx}_1 = f_n(- \sqrt{d} (M \bx)_1)$. Suppose we have already determined $s^{\bx}_1, \ldots s^{\bx}_j$ for some $j \in [d-1]$ and now aim to choose $s^{\bx}_{j+1}$. If $(M \bx)_{j+1} \cdot \sum_{i=1}^j s^{\bx}_i (M \bx)_i < 0$, then we set $s^{\bx}_{j+1} = f_n(- \sqrt{d} (M \bx)_{j+1})$; in all other cases we set $s^{\bx}_{j+1} = f_n(- \sqrt{d} (M \bx)_{j+1}) - 4$. Observe that Properties (1) and (2) follow immediately from the definition of $f_n$ and from our process of choosing $s^{\bx}_1, \ldots s^{\bx}_d$. Similarly, Property (3) follows from our process of choosing $s^{\bx}_1, \ldots s^{\bx}_d$ and by Lemma~\ref{lem:concentrationS}(ii).    
\end{proof}

\subsection{Rudimentary probabilistic estimations}\label{sec:col-cnt}

The main results of this section are Lemmas~\ref{lem:single-even} and~\ref{lem:joint-even} stated below. The proofs of these being rudimentary (yet crucial to subsequent arguments) are thus delegated to Appendix~\ref{app:rud}. Roughly put, these two lemmas deal with determining the probabilities of events of the form $\la \br, \bx \ra = 2k$, where $\br$ is a Rademacher vector, $\bx \in \{-1,1\}^n$, $n \in \mathbb{N}$ is even, and $k \in \mathbb{Z}$; we refer to such probabilities as {\sl core probabilities}.  The focus on the inner product being even is owing to the fact that  $\sum_{i=1}^n \by_i = \#_1(\by) - \#_{-1}(\by)$ holds for any vector $\by \in \{-1,1\}^n$. Since $n$ is even, there exists an integer $y$ such that $\#_1(\by) = n/2 + y$ leading to $\sum_{i=1}^n \by_i = n/2 +y - (n/2 -y) = 2y$. The following is then implied. 

\begin{observation}\label{obs:sum-2t}
Let $n$ be a positive even integer and let $k \in \mathbb{Z}$. Then, 
\begin{equation}\label{eq:sum-2t}
|S_k| = \binom{n}{n/2+k},
\end{equation}
where $S_k := \Big\{\bv \in \{-1,1\}^n: \sum_{i=1}^n \bv_i = 2k\Big\}$.
\end{observation}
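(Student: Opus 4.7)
The plan is a direct counting argument based on the standard bijection between sign vectors in $\{-1,1\}^n$ and their $+1$-support. First I would write, for any $\bv \in \{-1,1\}^n$, $k := \#_1(\bv)$ for the number of coordinates equal to $+1$, so that $\#_{-1}(\bv) = n-k$ and
\[
\sum_{i=1}^n \bv_i \;=\; k - (n-k) \;=\; 2k - n.
\]
As already noted in the paragraph preceding the statement, the defining condition $\sum_{i=1}^n \bv_i = 2t$ of $S_t$ is therefore equivalent to $k = n/2 + t$. Since $n$ is even, this is a genuine integer constraint, and $\bv \in S_t$ holds precisely when $\bv$ has exactly $n/2 + t$ coordinates equal to $+1$.

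To conclude, I would invoke the bijection $\bv \mapsto \{i \in [n] : \bv_i = 1\}$ between $\{-1,1\}^n$ and the power set of $[n]$. Under this bijection $S_t$ corresponds to the collection of subsets of $[n]$ of size $n/2 + t$, whose cardinality is by definition $\binom{n}{n/2+t}$, yielding~\eqref{eq:sum-2t}. In the degenerate range $n/2 + t \notin \{0,1,\ldots,n\}$, both sides of~\eqref{eq:sum-2t} vanish under the usual convention for binomial coefficients, so no separate case analysis is needed. There is really no obstacle to overcome here: the entire statement follows mechanically from the identity $\sum_i \bv_i = \#_1(\bv) - \#_{-1}(\bv)$ together with the evenness of $n$, which forces $n/2 + t$ to be an integer whenever $t \in \mathbb{Z}$.
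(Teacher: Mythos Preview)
Your proposal is correct and follows precisely the paper's approach: the paper does not give a separate proof of this observation but derives it from the identity $\sum_i \bv_i = \#_1(\bv) - \#_{-1}(\bv)$ in the paragraph immediately preceding the statement, which is exactly what you do. Your added remarks on the bijection and the degenerate range are fine but not needed beyond what the paper already sketches.
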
 

\medskip

Let 
$$
\mathcal{E}_n = \Big\{\bv \in \{-1,1\}^n: \#_1(\bv) \equiv 0 \; \pmod 2\Big\}
$$ 
denote the set of so-called {\em even} members of $\{-1,1\}^n$. The first main result of this section reads as follows.  

\begin{lemma}\label{lem:single-even}
Let $n \in \mathbb{N}$ be even, let $\br$ be a vector sampled uniformly at random from $\mathcal{E}_n$, let $\bx \in \{-1,1\}^n$, and let $k \in \mathbb{Z}$ be such that $\Pr[\la \br, \bx \ra = 2k] > 0$. Then,
\begin{equation}\label{eq:single-even}
\Pr\Big[\la \br, \bx \ra = 2k \Big] = \frac{1}{2^{n-1}} \binom{n}{n/2+k}.
\end{equation}
\end{lemma}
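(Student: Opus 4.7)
My plan is to use the coordinate-wise product bijection $\br \mapsto \bv := \br \odot \bx$ on $\{-1,1\}^n$ (where $\bv_i = \br_i \bx_i$), which converts the inner product $\la \br, \bx \ra$ into the coordinate sum $\sum_i \bv_i$. By Observation~\ref{obs:sum-2t}, exactly $\binom{n}{n/2+t}$ vectors $\bv \in \{-1,1\}^n$ satisfy $\sum_i \bv_i = 2t$, so the bijection shows that the same count of $\br \in \{-1,1\}^n$ satisfies $\la \br, \bx \ra = 2t$. Since a standard coordinate-flip argument gives $|\mathcal{E}_n| = 2^{n-1}$, the identity \eqref{eq:single-even} will follow the moment I can show that every $\br \in \{-1,1\}^n$ with $\la \br, \bx \ra = 2t$ actually lies in $\mathcal{E}_n$.

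The core of the argument is a parity computation. Partitioning $[n]$ according to the sign patterns of $(\br_i, \bx_i) \in \{-1,1\}^2$, with $a, b, c, d$ denoting the numbers of indices falling into the classes $(1,1), (1,-1), (-1,1), (-1,-1)$, one computes
$$
\#_1(\br) + \#_1(\bx) + \#_1(\bv) = (a+b) + (a+c) + (a+d) = 2a + n \equiv 0 \pmod{2},
$$
where the last step uses that $n$ is even. The equality $\la \br, \bx \ra = 2t$ is equivalent to $\#_1(\bv) = n/2 + t$, which fixes the parity of $\#_1(\bv)$ in terms of $t$. Combining these two facts, the parity of $\#_1(\br)$ is determined entirely by $\bx$ and $t$; in particular, it is \emph{the same} for every $\br \in \{-1,1\}^n$ satisfying $\la \br, \bx \ra = 2t$.

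The hypothesis $2t \in \supp \la \br, \bx \ra$ (with $\br$ drawn uniformly from $\mathcal{E}_n$) now closes the loop: it produces at least one $\br \in \mathcal{E}_n$ with $\la \br, \bx \ra = 2t$, so the forced parity of $\#_1(\br)$ must be even. Hence all $\binom{n}{n/2+t}$ vectors $\br \in \{-1,1\}^n$ with $\la \br, \bx \ra = 2t$ lie in $\mathcal{E}_n$, and dividing by $|\mathcal{E}_n| = 2^{n-1}$ yields \eqref{eq:single-even}. I don't anticipate any real obstacle; the only delicate point is the parity bookkeeping in the middle step, and the hypothesis on the support is used only to select between the two parities of $\#_1(\br)$ a priori consistent with $\la \br, \bx \ra = 2t$.
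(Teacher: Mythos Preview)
Your proof is correct and follows the same overall architecture as the paper: both use the coordinatewise-product bijection $\br \mapsto \bv = \br \odot \bx$ to reduce the count to $|S_t| = \binom{n}{n/2+t}$ via Observation~\ref{obs:sum-2t}, and both use the support hypothesis to produce a single witness $\br \in \mathcal{E}_n$ with $\la \br,\bx\ra = 2t$, from which the parity of every other $\br$ with this property is inferred.

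The difference is in how that parity inference is carried out. The paper routes it through two auxiliary results: Lemma~\ref{lem:even-diff-same-sum} (two vectors with the same coordinate sum have even Hamming distance) and Lemma~\ref{lem:diff-even} (even Hamming distance to a member of $\mathcal{E}_n$ forces membership in $\mathcal{E}_n$, proved by induction on the distance). You replace both by the single identity $\#_1(\br) + \#_1(\bx) + \#_1(\br\odot\bx) \equiv n \pmod 2$, derived in one line from the sign-pattern partition. This is strictly more elementary and self-contained; the paper's route, on the other hand, develops Lemmas~\ref{lem:even-diff-same-sum} and~\ref{lem:diff-even} which it does not otherwise reuse, so your shortcut loses nothing.
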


The second main result of this section reads as follows. 

\begin{lemma}\label{lem:joint-even}
Let $n \in \mathbb{N}$ be even, let $\br$ be a vector sampled uniformly at random from $\mathcal{E}_n$, let $\bx,\by \in \{-1,1\}^n$ satisfying $\#_1(\bx) \equiv \#_1(\by) \pmod{2}$ be given, and let $\alpha = \alpha(\bx,\by)$ be as in Claim~\ref{claim:abc}. Then, for any pair of integers $k_{\bx}$ and $k_{\by}$ satisfying $\Pr \left[\la \br,\bx \ra = 2 k_{\bx}, \la \br,\by \ra =2 k_{\by} \right] > 0$, the equality  
\begin{align}
\Pr \Big[\la \br,\bx \ra = 2 k_{\bx}, \la \br,\by \ra =2 k_{\by} \Big] &= \frac{1}{2^{n-1}}\binom{\alpha n}{\frac{\alpha n + k_{\bx} + k_{\by} }{2}}\binom{(1-\alpha)n}{\frac{(1-\alpha) n + k_{\bx} - k_{\by}}{2}}\label{eq:joint-prob-1}
\end{align}
holds.
\end{lemma}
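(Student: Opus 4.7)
The plan is to partition the coordinates according to where $\bx$ and $\by$ agree, decouple the two inner products via a change of variables, and then compute the probability by counting the vectors $\br \in \{-1,1\}^n$ satisfying the joint conditions while showing that all such vectors automatically lie in $\mathcal{E}_n$.

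First, I would set $A = \{i \in [n] : \bx_i = \by_i\}$ and $B = [n] \setminus A$, so that $|A| = \alpha n$ and $|B| = (1 - \alpha) n$. A short check using the hypothesis $\#_1(\bx) \equiv \#_1(\by) \pmod{2}$ shows that $|B|$ is even, whence $|A|$ is even as well. I would then introduce $S_A := \sum_{i \in A} \br_i \bx_i$ and $S_B := \sum_{i \in B} \br_i \bx_i$. Since $\bx_i = \by_i$ for $i \in A$ and $\bx_i = -\by_i$ for $i \in B$, one has the linear change of variables $\la \br, \bx\ra = S_A + S_B$ and $\la \br, \by\ra = S_A - S_B$, so the joint event $\{\la \br,\bx \ra = 2t_x,\, \la \br,\by\ra = 2t_y\}$ is exactly $\{S_A = t_x + t_y,\, S_B = t_x - t_y\}$.

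Next, working with $\br$ uniform on $\{-1,1\}^n$ for the moment, the families $(\br_i \bx_i)_{i \in A}$ and $(\br_i \bx_i)_{i \in B}$ are jointly independent Rademacher variables, so by Observation~\ref{obs:sum-2t} the number of $\br \in \{-1,1\}^n$ with prescribed values of $S_A$ and $S_B$ equals
\begin{equation*}
\binom{\alpha n}{\tfrac{\alpha n + t_x + t_y}{2}} \binom{(1-\alpha) n}{\tfrac{(1-\alpha) n + t_x - t_y}{2}}.
\end{equation*}

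Finally, I would deal with the conditioning on $\br \in \mathcal{E}_n$. A direct parity computation shows that for any $\bv \in \{-1,1\}^n$ and any $\br \in \{-1,1\}^n$, the identity $\la \br, \bv \ra = 2t$ forces $\#_1(\br) \equiv \#_1(\bv) + t \pmod{2}$ (using that $n$ is even). Since by hypothesis $2t_x \in \supp \la \br, \bx\ra$ with $\br$ drawn from $\mathcal{E}_n$, this already imposes $t_x \equiv \#_1(\bx) \pmod 2$, and hence every $\br \in \{-1,1\}^n$ satisfying $\la \br, \bx \ra = 2t_x$ in fact has $\#_1(\br) \equiv 0 \pmod 2$, i.e., belongs to $\mathcal{E}_n$. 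Consequently the joint probability under the uniform measure on $\mathcal{E}_n$ is obtained simply by dividing the count above by $|\mathcal{E}_n| = 2^{n-1}$, producing the claimed formula.

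The only real obstacle is the parity bookkeeping: one must verify that $\alpha n + t_x + t_y$ and $(1 - \alpha) n + t_x - t_y$ are both even so that the binomial coefficients are well-defined (this follows from $|A|, |B|$ being even together with the compatibility $t_x \equiv t_y \pmod 2$ forced by the two support hypotheses), and then verify that the event already lives inside $\mathcal{E}_n$ so that no spurious factor of $2$ appears in the denominator.
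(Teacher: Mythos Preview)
Your approach is essentially the same as the paper's: both partition coordinates into the agreement set $A$ and the disagreement set $B$, observe that the two inner products decouple as $S_A \pm S_B$, and then count. One small slip: the parity identity you state should read $\#_1(\br) \equiv \#_1(\bv) + t + n/2 \pmod 2$ (the $n/2$ term is missing), but this does not affect your conclusion, since all you actually need is that $\#_1(\br) \pmod 2$ is determined by $\bv$ and $t$; in fact your handling of the $\mathcal{E}_n$ conditioning is more explicit than the paper's, which simply divides the count over $\{-1,1\}^n$ by $2^{n-1}$ without further comment.
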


\section{Proof of the main result - Theorem~\ref{thm:main}} \label{sec:main-proof}

Using the fact that $\|M\|_\infty \leq 1$ holds whenever $M$ is Koml\'os (for indeed $\|\bv\|_\infty \leq \|\bv\|_2 \leq 1$ holds for every column $\bv$ of $M$), we deduce Theorem~\ref{thm:main} from the following claim. 

\begin{claim}\label{clm:main}
Let $d = \omega(1)$ be an integer and let $n = \omega(d \log d)$ be an even integer. Let $\cS$ be a distribution over $\{-1,1\}^n$ which forms a deterministic extension of $\D_{n-i}$, for some $i \in \{1,2\}$, and such that $\#_1(\bx) \equiv 0 \pmod{2}$ holds for every vector $\bx \in \supp \cS$. Let $M \in \mathbb{R}^{d \times n}$ be a Koml\'os matrix whose last $i$ columns are $\bold{0}$, and let $R \in \mathbb{R}^{d \times n}$ be a Rademacher matrix such that $\#_1(\br) \equiv 0 \pmod{2}$ holds for every row $\br$ of $R$. Then, a.a.s. there exists a vector $\bx \in \supp \cS$ such that $\|(M+R/\sqrt{d})\bx \|_\infty \leq 4 d^{-1/2}$ holds.
\end{claim}

\medskip
\noindent
{\bf Claim~\ref{clm:main} implies Theorem~\ref{thm:main}:} 
Let $n$ and $M$ per the premise of Theorem~\ref{thm:main} be given. Set $M_1 := [\;M \mid \bold{0} \;] \in \mathbb{R}^{d \times (n+1)}$ and $M_2 := [\;M \mid \bold{0} \mid \bold{0} \;] \in \mathbb{R}^{d \times (n+2)}$, where $\bold{0}$ denotes the zero vector in $\mathbb{R}^d$; in particular, $M_1$ and $M_2$ are both Koml\'os. Let $R_1 \in \mathbb{R}^{d \times (n+1)}$ and $R_2 \in \mathbb{R}^{d \times (n+2)}$ be Rademacher matrices, each satisfying the row parity condition stated in Claim~\ref{clm:main}. 

Given $\bx \in \supp \D$, define $\bx^{(1)} := [\bx \mid \ell] 
	\in \{-1,1\}^{n+1}$ and $\bx^{(2)} := [\bx \mid \ell_1 \mid 
	\ell_2] 
	\in \{-1,1\}^{n+2}$, where 
	$$
	\ell := 
	\begin{cases}
	-1, & \#_1(\bx) \equiv 0 \pmod{2},\\
	\phantom{-}1, & \#_1(\bx) \equiv 1 \pmod{2},
	\end{cases}
	$$
and 
	$$
	(\ell_1,\ell_2) := 
	\begin{cases}
	(-1,-1), & \#_1(\bx) \equiv 0 \pmod{2},\\
    (-1, \phantom{-}1), & \#_1(\bx) \equiv 1 \pmod{2}.
	\end{cases}
	$$
It follows that $\#_1(\bx^{(1)})\equiv \#_1(\bx^{(2)}) \equiv 0 \pmod{2}$ holds for every $\bx \in \supp \D$. For $i \in \{1,2\}$, define $\mathcal{S}_i$ to be a distribution set over $\{-1,1\}^{n+i}$ obtained by first sampling a vector $\bx \in \{-1,1\}^n$ according to the distribution $\D$ and then performing the (injective) deterministic extension yielding $\bx^{(i)}$.

If $n$ is odd, then set $N:= M_1$, $\mathcal{S} = \mathcal{S}_1$, and $R:= R_1$; otherwise set $N:= M_2$, $\mathcal{S} = \mathcal{S}_2$, and $R = R_2$. Claim~\ref{clm:main} asserts that a.a.s. there exists a vector $\by \in \supp \cS$ for which $\|(N+R/\sqrt{d})\by\|_\infty \leq 4d^{-1/2}$ holds. Resampling the first entry of every row of $R$ allows for a conformal Rademacher matrix to be sampled uniformly at random at the price of increasing the discrepancy by at most $2 d^{-1/2}$ asymptotically almost surely. Expose $R$ and let $R'$ be the matrix obtained from $R$ by dropping its last column, if $n$ is odd, and its last two columns, if $n$ is even. In addition, let $\by' \in \{-1,1\}^n$ be the vector obtained from $\by$ by dropping its last entry, if $n$ is odd, and its last two entries, if $n$ is even. Note that, $\|(M+R'/\sqrt{d})\by'\|_\infty \leq 8d^{-1/2}$.   
\hfill\inQED

\medskip

The remainder of this section is devoted to the proof of Claim~\ref{clm:main}. For every $\bx \in \supp \cS$ and every $i \in [d]$, let $s^{\bx}_i$ be as in Lemma~\ref{lem::choosesX} and let $t^{\bx}_i = - \sqrt{d} (M \bx)_i + s^{\bx}_i$. Set $\Delta := 4d^{-1/2}$ and define the random variable
\begin{align*}
S := S(R) = \sum_{\bx \in \supp \cS} \mathbbm{1}\left\{R \bx = (t^{\bx}_1, \ldots, t^{\bx}_d) \right\} \cdot \Pr_{\by \sim \cS}[\by = \bx] = \Ex_{\bx \sim \cS} \left[\mathbbm{1}\left\{R \bx = (t^{\bx}_1, \ldots, t^{\bx}_d) \right\} \right]
\end{align*}
whose sole source of randomness is $R$. 
It suffices to prove that $S > 0$ holds asymptotically almost surely. Indeed, if the latter holds, then for {\sl almost every} Rademacher matrix $R$, there exists a vector $\bx \in \supp \cS$ for which 
$$
\mathbbm{1}\left\{R \bx = (t^{\bx}_1, \ldots, t^{\bx}_d) \right\} \cdot \Pr_{\by \sim \cS}[\by = \bx] > 0 
$$
holds. It then follows by our choice of $\Delta$ and by Lemma~\ref{lem::choosesX}(1) that for almost every Rademacher matrix $R$, there exists a vector $\bx \in \supp \cS$ for which the event 
$
\left\|\left(M+R/\sqrt{d}\right)\bx \right\|_\infty  \leq \Delta
$
occurs.

Establishing that $\Ex_R[S] >0$ (in Claim~\ref{clm:Ex>0} below) enables an appeal to the following consequence of the Paley-Zygmund inequality (see, e.g.,~\cite{Durrett})
\begin{equation}\label{eq:Paley-Zyg}
\Pr_R[S>0] \geq \frac{\Ex_R[S]^2}{\Ex_R[S^2]}.
\end{equation}
Hence, given that $\Ex_R[S] >0$ holds, it suffices to prove that 
\begin{equation}\label{eq:PZ-goal}
\Ex_R[S^2] \leq (1+o(1))\Ex_R[S]^2
\end{equation} 
in order to deduce that $\Pr_R[S>0] \geq 1-o(1)$.

\medskip

Prior to proving Claim~\ref{clm:Ex>0}, it will be useful to establish the following simple fact.
\begin{claim} \label{cl::Px>0}
 $\Pr_R \left[R \bx = (t^{\bx}_1, \ldots, t^{\bx}_d)  \right] > 0$ for every $\bx \in \supp \cS$.
\end{claim}

\begin{proof}
Fix an arbitrary vector $\bx \in \supp \cS$. Then,  
$$
\|M\bx\|_{\infty} = O(\sqrt{\log d}) < n/\sqrt{d}, 
$$
where the equality holds by Lemma~\ref{lem:concentrationS}(ii), and the inequality holds since $n$ is assumed to be sufficiently large with respect to $d$. It follows that $(M\bx)_i \in [-n/\sqrt{d}, n/\sqrt{d}]$ holds for every $i \in [d]$.

Since $n$ is even and $\#_1(\br) \equiv 0 \pmod{2}$ holds for every row $\br$ of $R$, it follows that for every $i \in [d]$ and every $k \in \{m \in [- n, n] : m \equiv n \pmod{4} \}$, there exists a vector $\br_i \in \mathcal{E}_n$ such that $\la \br_i, \bx \ra = k$. It then follows by Lemma~\ref{lem::choosesX}(2) that there exists a choice of $R$ with each of its rows satisfying the parity condition stated in Claim~\ref{clm:main} such that $(R\bx)_i = t^{\bx}_i$ holds for every $i \in [d]$; this concludes the proof of the claim.
\end{proof}


\begin{claim}\label{clm:Ex>0}
$\Ex_R[S] >0$.
\end{claim}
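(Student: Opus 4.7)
The plan is to apply Fubini/linearity of expectation and reduce the positivity of $\Ex_R[S]$ to the positivity of the per-vector probabilities established in Claim~\ref{cl::Px>0}. Writing $S$ in its sum form, linearity over $R$ yields
\begin{equation*}
\Ex_R[S] = \sum_{\bx \in \R} \Pr_R\!\left[\|(M+R/\sqrt{d})\bx\|_\infty \leq \Delta\right]\cdot \Pr_{\by \sim \D}[\by = \bx \mid \by \in \R].
\end{equation*}
Each factor in this sum is non-negative, so it suffices to exhibit a single $\bx \in \R$ for which both factors are strictly positive.

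For the first factor, for every $\bx \in \R$, Claim~\ref{cl::Px>0} gives $\Pr_R[\|(M+R/\sqrt{d})\bx\|_\infty \leq \Delta] > 0$, so every term in the sum has a strictly positive first factor. For the second factor, observe that the conditional probabilities $\Pr_{\by \sim \D}[\by = \bx \mid \by \in \R]$, as $\bx$ ranges over $\R$, are a non-negative collection summing to $1$; this conditioning is well-defined because $\R \subseteq \supp \D$ and hence $\Pr_{\by \sim \D}[\by \in \R] > 0$, and because $|\R| \geq 2$ (so in particular $\R \neq \emptyset$). It follows that at least one $\bx \in \R$ has $\Pr_{\by \sim \D}[\by = \bx \mid \by \in \R] > 0$, and for that $\bx$ both factors in the corresponding summand are strictly positive.

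Combining these two observations, the sum is strictly positive, giving $\Ex_R[S] > 0$. There is essentially no obstacle here: the heavy lifting has already been done in Claim~\ref{cl::Px>0} (which in turn relied on $\bx$ being $(C_{\ref{clm:main}}, M)$-shallow and on the fact that $(R\bx/\sqrt{d})_i$ takes values on the lattice $\{k/\sqrt{d} : k \in 2\mathbb{Z},\ |k|\leq n\}$ with granularity matching $\Delta = d^{-1/2}$). The only task remaining is to write out the two-line linearity argument and observe that the distribution $\Pr_{\by \sim \D}[\cdot \mid \by \in \R]$ is a genuine probability measure on the nonempty set $\R$.
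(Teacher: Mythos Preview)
Your proof is correct and follows essentially the same approach as the paper: both apply linearity/Fubini to write $\Ex_R[S]$ as an average (over $\bx \in \R$, weighted by $\Pr_{\by \sim \D}[\by = \bx \mid \by \in \R]$) of the probabilities $P_\bx$, and then invoke Claim~\ref{cl::Px>0} to conclude positivity. The paper's write-up is slightly terser, noting directly that the conditional expectation of a strictly positive quantity is positive, whereas you spell out the well-definedness of the conditioning and the existence of a positive-weight term; the content is the same.
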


\begin{proof}
Note that 
\begin{align*}
\Ex_R[S] = \Ex_{\bx \sim \cS} \Ex_R \left[\mathbbm{1}\left\{R \bx = (t^{\bx}_1, \ldots, t^{\bx}_d) \right\} \right] = \Ex_{\bx \sim \cS} \Pr_R \left[R \bx = (t^{\bx}_1, \ldots, t^{\bx}_d) \right] > 0,
\end{align*}
where the above inequality holds by Claim~\ref{cl::Px>0}.
\end{proof}

Turning our attention to~\eqref{eq:PZ-goal}, note that
\begin{align*}
(\Ex_R[S])^2 = \left(\Ex_{\bx \sim \cS} \Pr_R \left[R \bx = (t^{\bx}_1, \ldots, t^{\bx}_d) \right]\right) \cdot \left(\Ex_{\by \sim \cS} \Pr_R \left[R \by = (t^{\by}_1, \ldots, t^{\by}_d) \right]\right) = \Ex_{\bx,\by \sim \cS} \left[P_\bx P_\by \right],
\end{align*}
where, for every $\bx \in \supp \cS$,
$$
P_\bx := \Pr_R \left[R \bx = (t^{\bx}_1, \ldots, t^{\bx}_d) \right].
$$
 
Similarly
\begin{align*}
\Ex_R[S^2] & = \Ex_R \left[\Ex_{\bx \sim \cS} \left[ \mathbbm{1} \left\{R \bx = (t^{\bx}_1, \ldots, t^{\bx}_d) \right\} \right] \cdot \Ex_{\by \sim \cS} \left[\mathbbm{1} \left\{ R \by = (t^{\by}_1, \ldots, t^{\by}_d) \right\} \right] \right]\\
& = \Ex_R \Ex_{\bx,\by\sim \cS} \left[ \mathbbm{1}\left\{R \bx = (t^{\bx}_1, \ldots, t^{\bx}_d) \right\} \cdot \mathbbm{1} \left\{R \by = (t^{\by}_1, \ldots, t^{\by}_d) \right\} \right] \\
& = \Ex_{\bx,\by \sim \cS} \left[ \Pr_R \left[R \bx = (t^{\bx}_1, \ldots, t^{\bx}_d), R \by = (t^{\by}_1, \ldots, t^{\by}_d) \right] \right] = \Ex_{\bx,\by \sim \cS} \left[ P_{\bx,\by} \right],
\end{align*}
where, for every $\bx, \by \in \supp \cS$, 
$$
P_{\bx,\by} := \Pr_R \left[R \bx = (t^{\bx}_1, \ldots, t^{\bx}_d), R \by = (t^{\by}_1, \ldots, t^{\by}_d) \right].
$$
The goal~\eqref{eq:PZ-goal} can then be rewritten as follows 
\begin{equation}\label{eq:new-goal}
\Ex_{\bx,\by \sim \cS} \left[ P_{\bx,\by} \right] \leq (1+o(1))\Ex_{\bx,\by \sim \cS} \left[ P_\bx P_\by \right].
\end{equation}

\medskip
We begin by considering the right hand side of~\eqref{eq:new-goal}. Our first result in this respect is an estimation of $P_{\bx}$.

\begin{lemma}
\label{lemma:Px1}
Suppose that $n = \omega(d)$. Then, for every $\bx \in \supp \cS$, it holds that 
    $$
    P_{\bx} = (1+o_d(1)) \left(\frac{8}{\pi n}\right)^{d/2} \cdot
    \exp\left( - \frac{1}{2n} \sum_{i=1}^{d} (t_i^\bx)^2 \right) \cdot \exp (\delta_{\bx}), 
    $$
    where $\delta_{\bx} = \left( O\left(n^{-2} d^{3/2} \log d \right) + o \left(n^{-1} \sqrt{d} \right) \right) |\la M\bx, \bold{1} \ra|$.
\end{lemma}

\begin{proof} 
Fix any $\bx \in \supp \cS$. It follows by the independence of the entries of $R$ that
\begin{align}
P_\bx &= \Pr_R\left[R\bx = (t_1^{\bx}, \ldots, t_d^{\bx}) \right] = \prod_{i=1}^d \Pr_R\left[(R\bx)_i = t_i^{\bx} \right]
= \prod_{i=1}^d \frac{1}{2^{n-1}}\binom{n}{\frac{n+t_i^{\bx}}{2}}\nonumber \\
&= \left(1+o_d(1)\right)\left(\frac{8}{\pi n} \right)^{d/2}\prod_{i=1}^d \exp \left(\Theta\left(\frac{(t_i^\bx)^3}{n^2}\right) \right) \exp \left(o\left(\frac{t_i^\bx}{n}\right) \right) \exp\left(- \frac{(t_i^\bx)^2}{2n}\right), \label{eq:Px-post-Stirling}
\end{align}
where the penultimate equality holds by Lemma~\ref{lem:single-even} and the last equality holds by Proposition~\ref{prop:Spencer} (note that $t_i^{\bx} = o(n)$ holds by Lemma~\ref{lem:concentrationS}(ii) and that $(1 + o_n(1))^d = 1 + o_d(1)$ holds by footnote 8 and since $n = \omega(d \log d)$).

In light of~\eqref{eq:Px-post-Stirling}, in order to complete the proof of the lemma, it suffices to prove that $\left|\sum_{i=1}^d t_i^{\bx} \right| = \sqrt{d} |\la M\bx, \bold{1} \ra| + o(n)$ and that $\left|\sum_{i=1}^d (t_i^{\bx})^3 \right| = O \left(d^{3/2} \log d |\la M\bx, \bold{1} \ra| \right) + o(n^2)$, which would in turn imply that
\begin{align*}
&  \prod_{i=1}^d \exp \left(\Theta\left(\frac{(t_i^\bx)^3}{n^2}\right) \right) \exp \left(o\left(\frac{t_i^\bx}{n}\right) \right) = \exp \left( \Theta \left(\frac{\sum_{i=1}^d (t_i^\bx)^3}{n^2} \right) + o \left(\frac{\sum_{i=1}^d t_i^\bx}{n} \right) \right) \\ 
&= (1+o_d(1)) \exp \left(\left( O\left(n^{-2} d^{3/2} \log d \right) + o \left(n^{-1} \sqrt{d} \right) \right) |\la M\bx, \bold{1} \ra| \right).
\end{align*}

Since $t_i^\bx = - \sqrt{d} (M\bx)_i + s^{\bx}_i$ holds for every $i \in [d]$, it follows that
\begin{align} \label{eq::sumtix}
   \left| \sum_{i=1}^d t_i^\bx \right| = \left| \sum_{i=1}^d - \sqrt{d} (M\bx)_i + s^{\bx}_i \right| \leq \sqrt{d} \left|\sum_{i=1}^d (M\bx)_i \right| + O(d) = \sqrt{d} |\la M \bx, \bold{1} \ra| + o(n),
\end{align}
where the last equality holds since $n = \omega (d)$ by the premise of the lemma.

\medskip
\noindent
Similarly, for every $i \in [d]$,
\begin{align*}
    (t_i^\bx)^3 = - d^{3/2} ((M\bx)_i)^3 + 3d s^{\bx}_i ((M\bx)_i)^2 - 3 \sqrt{d} (s^{\bx}_i)^2 (M\bx)_i + (s^{\bx}_i)^3.
\end{align*}
It thus follows that
\begin{align} \label{eq::sumtix3}
   \left| \sum_{i=1}^{d} (t_i^\bx)^3 \right| &\leq d^{3/2} \left| \sum_{i=1}^d ((M\bx)_i)^3 \right| + 3d \left| \sum_{i=1}^d s^{\bx}_i ((M\bx)_i)^2 \right| - 3 \sqrt{d} \left| \sum_{i=1}^d (s^{\bx}_i)^2  (M\bx)_i \right| + \left| \sum_{i=1}^d (s^{\bx}_i)^3 \right| \nonumber \\
   &\leq d^{3/2} \|M \bx\|_{\infty}^2 \left| \sum_{i=1}^d (M\bx)_i \right| + 12 d \|M \bx\|_2^2 + 48 \sqrt{d} \left| \sum_{i=1}^d ((M\bx)_i)^2 + 1 \right| + O(d) \nonumber \\
   &\leq d^{3/2} \log d |\la M \bx, \bold{1} \ra| + O(d^2) + 48 \sqrt{d} \|M \bx\|_2^2 + O(d^{3/2}) + O(d) \nonumber \\ 
   &\leq d^{3/2} \log d |\la M \bx, \bold{1} \ra| + o(n^2),
\end{align}
where the second inequality holds since $|s^{\bx}_i| \leq 4$ by Lemma~\ref{lem::choosesX}(1), the penultimate inequality holds by parts (i) and (ii) of Lemma~\ref{lem:concentrationS}, and the last inequality holds by Lemma~\ref{lem:concentrationS}(i) and since $n = \omega(d)$ by the premise of the lemma.
\end{proof}

Lemma~\ref{lemma:Px1} provides the following useful uniform estimation on the probabilities $P_{\bx}$.

\begin{lemma}
\label{lemma:Pxeqp}
Suppose that $n = \omega(d)$ and let $p = \left(\frac{8}{\pi n}\right)^{d/2} \, \cdot \exp\left(- r^2 \delta^2/2 \right)$,
where $r$ is as in Lemma~\ref{lem:concentration}(i) and $\delta = \sqrt{d/n}$.
Then, $P_{\bx} = (1+o_d(1)) p \cdot \exp \left(\delta_{\bx} \right)$ holds for every $\bx \in \supp \cS$, where $\delta_{\bx}$ is as in Lemma~\ref{lemma:Px1}.
\end{lemma}

\begin{proof} 
    Fix any $\bx \in \supp \cS$. In light of Lemma~\ref{lemma:Px1} it suffices to show that
    $$
        \frac{1}{2n}\sum_{i=1}^{d} (t_i^\bx)^2 = r^2 \delta^2/2 + o_d(1).
    $$
    Since $t_i^{\bx} = - \sqrt{d}(M\bx)_i + s_i^\bx$, where $s_i^\bx \in [-4,4]$ by Lemma~\ref{lem::choosesX}(1), holds for every $i \in [d]$, it follows that
    \begin{align} \label{eq::sumtix2}
        \sum_{i=1}^d (t_i^\bx)^2 &= d \sum_{i=1}^d ((M\bx)_i)^2
        - 2\sqrt{d} \sum_{i=1}^{d} s_i^\bx (M\bx)_i
        + \sum_{i=1}^{d} (s_i^\bx)^2
        \nonumber \\
        &= d\|M\bx\|^2_2 - 2\sqrt{d}\sum_{i=1}^d s_i^\bx (M\bx)_i + O(d),
    \end{align}
    where the last equality holds by Lemma~\ref{lem::choosesX}(1). For $r = O(\sqrt{d})$ and $\mathcal{T} = d^{-C'}$, with $C' >1$ some constant, $\| M\bx \|_2 \in \left[r - \mathcal{T}, r + \mathcal{T} \right]$ holds, by Lemma~\ref{lem:concentrationS}(i); this, in turn, implies that
    \begin{align*}
      r^2 - O(1) \leq (r-\mathcal{T})^2 \leq \|M\bx\|_2^2\leq (r+\mathcal{T})^2 \leq r^2 + O(1).
    \end{align*}
    Since, moreover, $n = \omega(d)$, $r = O(\sqrt{d})$, and $\delta = \sqrt{d/n}$, it follows that
    \begin{equation} \label{eq::2norm}
        \delta^2 \| M\bx \|_2^2/2 = \delta^2 r^2/2 \pm O \left(d n^{-1} \right) = \delta^2 r^2/2 + o_d(1).
    \end{equation}
    Combining~\eqref{eq::sumtix2} and~\eqref{eq::2norm}, leads to
    \begin{align*}
        \frac{1}{2n} \sum_{i=1}^d (t_i^\bx)^2 &= \frac{1}{2n} d \| M\bx \|_2^2 - \frac{1}{n} \sqrt{d} \sum_{i=1}^{d} s_i^\bx (M\bx)_i + O \left(n^{-1} d \right) \\
        &= \delta^2 r^2/2 - \frac{1}{n} \sqrt{d} \sum_{i=1}^{d} s_i^\bx (M\bx)_i + o_d(1).
    \end{align*}
    In order to conclude the proof of the lemma, it remains to prove that $\left|n^{-1} \sqrt{d}\sum_{i=1}^d s_i^\bx (M\bx)_i \right| = o_d(1)$. Indeed, it follows by Lemma~\ref{lem::choosesX}(3) that 
    \begin{align*}
        \left|n^{-1} \sqrt{d}\sum_{i=1}^d s_i^\bx (M\bx)_i \right| &= n^{-1} \sqrt{d} \left|\sum_{i=1}^d s_i^\bx (M\bx)_i \right| = O\left(n^{-1} \sqrt{d \log d} \right) = o_d(1).
    \end{align*}
\end{proof}

\medskip
We turn our attention to the left hand side of~\eqref{eq:new-goal}. 

\begin{lemma} \label{PxyBeta}
Let $n = \omega(d)$ be an even integer and let $\delta = \sqrt{d/n}$. Let $\bx, \by \in \supp \cS$ satisfying $- 1/2 \leq \varepsilon := \varepsilon(\bx, \by) \leq 1/2$ be given, and let 
\begin{align*}
\beta(\bx,\by) := &\exp \left(d\epsilon^2 + 2 \delta^2 | \epsilon \langle M\bx , M\by \rangle |+ 2 n^{-1} \sqrt{d} \left|\epsilon \la M \bx, s^{\by} \ra \right| + 2 n^{-1} \sqrt{d} \left|\epsilon \la M \by, s^{\bx} \ra \right| \right) \\
&\cdot \exp \left(\left(O \left(n^{-2} d^{3/2} \log d \right) + o \left(n^{-1} \sqrt{d} \right) \right) \left( |\la M \bx, \bold{1} \ra| + |\la M \by, \bold{1} \ra| \right) \right) .
\end{align*}
Then
$$
    P_{\bx,\by} \leq (1+o_d(1))\cdot P_{\bx}P_{\by} \cdot \beta(\bx,\by) \cdot \exp \left(- \delta_{\bx} - \delta_{\by} \right).
$$
\end{lemma}

\begin{proof}
Owing to our assumption that $|\varepsilon|\leq 1/2$, we may restrict our attention to pairs $(\bx,\by) \in (\supp \cS)^2$ such that $\bx \neq \by$. Given such a pair, let $k_i^\bx$ and $k_i^\by$ be integers satisfying $t_i^\bx = 2k_i^\bx$ and $t_i^\by = 2k_i^\by$. In a manner similar to that seen in the proof of Lemma~\ref{lemma:Px1}, it holds that
\begin{align}
P_{\bx,\by} & = \Pr_R\left[R\bx = (t_1^{\bx}, \ldots, t_d^{\bx}), R\by = (t_1^{\by}, \ldots, t_d^{\by}) \right] =  \prod_{i=1}^d \Pr_R\left[(R\bx)_i = t_i^\bx ,\; (R\by)_i = t_i^\by  \right] \nonumber \\
& = \prod_{i=1}^d \frac{1}{2^{n-1}} \binom{\alpha n}{\frac{\alpha n + k^\bx_i + k^\by_i}{2}} \binom{(1-\alpha)n}{\frac{(1-\alpha) n + k^\bx_i - k^\by_i}{2}}, \label{eq:Pxy-initial}
\end{align}
where the last equality holds by~\eqref{eq:joint-prob-1}. For
$$
L_i^{(1)} := \binom{\alpha n}{\frac{\alpha n + k^\bx_i + k^\by_i}{2}},
$$
we obtain
\begin{align*}
L_i^{(1)} &= \big(1+o_{n}(1)\big) 2^{\alpha n} \sqrt{\frac{2}{\pi \alpha n}} \exp\left(\Theta\left(\frac{(k_i^\bx + k_i^\by)^3}{(\alpha n)^2}\right) + o\left(\frac{k_i^\bx + k_i^{\by}}{\alpha n}\right) \right)  \exp \left(-  \frac{\left(k_i^\bx + k_i^\by \right)^2}{2 \alpha n} \right),
\end{align*}
where the equality holds by Proposition~\ref{prop:Spencer}. Similarly, for
$$
L_i^{(2)} :=  \binom{(1-\alpha)n}{\frac{(1-\alpha) n + k^\bx_i - k^\by_i}{2}},
$$
we obtain
$$
L_i^{(2)} = \big(1+o_{n}(1)\big) 2^{(1-\alpha) n} \sqrt{\frac{2}{\pi (1-\alpha) n}} \exp\left(\Theta\left(\frac{(k_i^\bx - k_i^\by)^3}{((1-\alpha) n)^2}\right) + o\left(\frac{k_i^\bx - k_i^{\by}}{(1 - \alpha) n}\right) \right) \exp \left(-  \frac{\left(k_i^\bx - k_i^\by \right)^2}{2 (1-\alpha) n} \right).
$$
It thus follows by~\eqref{eq:Pxy-initial} that
\begin{equation} \label{eq:Pxy-prior-Spencer-2}
P_{\bx,\by} = \prod_{i=1}^d \frac{1}{2^{n-1}} L_i^{(1)} L_i^{(2)}
    = (1+o_d(1))
    \prod_{i=1}^{d} \frac{4}{\pi n} \sqrt{\frac{1}{\alpha(1-\alpha)}}
    \cdot\exp\left(D_i+E_i+F_i \right),
\end{equation}
where 
\begin{align*}
D_i & := -\frac{(k_i^{\bx}+k_i^{\by})^2}{2 \alpha n} - \frac{(k_i^{\bx}-k_i^{\by})^2}{2 (1-\alpha) n}; \\ 
E_i &:= \Theta\left(\frac{(k_i^{\bx}+k_i^{\by})^3}{(\alpha n)^2} +\frac{(k_i^{\bx}-k_i^{\by})^3}{((1-\alpha) n)^2}\right); \\ 
F_i & := o\left(\frac{k_i^\bx + k_i^{\by}}{\alpha n} + \frac{k_i^\bx - k_i^{\by}}{(1 - \alpha) n}\right).
\end{align*}

In the sequel we prove that
\begin{align}
    \sum_{i=1}^{d}(D_i + E_i + F_i) \leq  &- \frac{1}{2n} \left( \sum_{i=1}^{d} (t_i^\bx)^2 + \sum_{i=1}^{d} (t_i^\by)^2 \right)
    + 2 \delta^2 | \epsilon \langle M\bx , M\by \rangle | \nonumber \\
    &+ 2 n^{-1} \sqrt{d} \left|\epsilon \la M \bx, s^{\by} \ra \right| + 2 n^{-1} \sqrt{d} \left|\epsilon \la M \by, s^{\bx} \ra \right| \nonumber  \\
    &+ O \left(n^{-2} d^{3/2} \log d \right) \left( |\la M \bx, \bold{1} \ra| +  |\la M \by, \bold{1} \ra| \right) \nonumber \\
    &+ o \left(n^{-1} \sqrt{d} \right) \left( |\la M \bx, \bold{1} \ra| + |\la M \by, \bold{1} \ra| \right) + o(1). \label{eq:DiEiFi}
\end{align}
Using~\eqref{eq:Pxy-prior-Spencer-2} and~\eqref{eq:DiEiFi} the proof concludes as follows. First, note that
\begin{align*}
    P_{\bx,\by} &= (1+o_d(1))
    \prod_{i=1}^{d} \frac{4}{\pi n} \sqrt{\frac{1}{\alpha(1-\alpha)}}
    \cdot\exp\left(D_i + E_i + F_i\right) \\
    &\leq (1+o_d(1))
     \left(\frac{8}{\pi n}\right)^d
     \exp\left(d\epsilon^2 + \sum_{i=1}^{d}(D_i + E_i + F_i) \right)\\
     &\leq (1+o_d(1))
     \left(\frac{8}{\pi n}\right)^d
     \exp\left( - \frac{1}{2n} \left( \sum_{i=1}^{d} (t_i^\bx)^2 + \sum_{i=1}^{d} (t_i^\by)^2 \right) \right)
    \cdot \beta(\bx,\by),
\end{align*}
where the first inequality holds by Claim~\ref{claim:abc}(b).

\medskip
Second, note that since $P_\bx =  (1+o_d(1))\left(\frac{8}{\pi n}\right)^{d/2} \cdot
    \exp\left( - \frac{1}{2n} \sum_{i=1}^{d} (t_i^\bx)^2 \right) \cdot \exp \left(\delta_{\bx} \right)$ holds by Lemma~\ref{lemma:Px1}, it follows that     
\begin{align*}
    P_{\bx,\by}
     &\leq (1+o_d(1)) P_{\bx} P_{\by} \cdot \beta(\bx, \by) \cdot \exp \left(- \delta_{\bx} - \delta_{\by} \right),
\end{align*}
as claimed. It remains to prove~\eqref{eq:DiEiFi}; to do so, we estimate each of the sums $\sum D_i$, $\sum E_i$, and $\sum F_i$ separately.

\bigskip
\noindent
{\bf Estimating $\boldsymbol{\sum D_i}$.} Start by writing 
\begin{align}
    D_i &= -\frac{(1-\alpha)(k_i^{\bx}+k_i^{\by})^2
    + \alpha(k_i^{\bx}-k_i^{\by})^2}{2 \alpha(1-\alpha) n} \nonumber
    \\
    &= -\frac{4 (1-\alpha)
    \left((k_i^{\bx})^2 + 2k_i^{\bx}k_i^{\by} + (k_i^{\by})^2 \right)
    + 4 \alpha
    \left((k_i^{\bx})^2 - 2k_i^{\bx}k_i^{\by} + (k_i^{\by})^2 \right)}
    {2 (1-\epsilon^2) n } \nonumber
    \\
    &= -\frac{4 \left((k_i^{\bx})^2 + (k_i^{\by})^2\right)}{2 (1-\epsilon^2) n}
    - \frac{4 (1-2\alpha) \cdot 2k_i^{\bx}k_i^{\by}}{2 (1-\epsilon^2 )n} \nonumber
    \\
    &= -\frac{(t_i^{\bx})^2 + (t_i^{\by})^2}{2n}
    - \frac{\epsilon^2\left((t_i^{\bx})^2 + (t_i^{\by})^2\right)}{2(1-\epsilon^2)n}
    + \frac{\epsilon t_i^{\bx}t_i^{\by}}{(1-\epsilon^2) n} \nonumber\\
    &\leq -\frac{(t_i^{\bx})^2 + (t_i^{\by})^2}{2n}
    + \frac{\epsilon t_i^{\bx}t_i^{\by}}{(1-\epsilon^2)n},\label{eq:Di2}
\end{align}
 where the second equality holds by Claim~\ref{claim:abc}(b); the last equality holds since $(1 - 2\alpha) = - \varepsilon$, $t_i^{\bx} = 2 k_i^{\bx}$ and $t_i^{\by} = 2 k_i^{\by}$, and by the following equality 
$$
\frac{A}{(1-z)B}=\frac{A}{B}+\frac{zA}{(1-z)B},
$$
holding for every $A,B$ and $z$. 
Finally, the inequality follows by discarding the middle term appearing on the preceding line; the latter is negative owing to $(1-\epsilon^2) \geq 1/2$, which holds since $|\epsilon| \leq 1/2$.

For the term $t_i^{\bx}t_i^{\by}$ appearing on the right hand side of~\eqref{eq:Di2}, we may write 
\begin{align*}
    t_i^{\bx}t_i^{\by} &= \left(-\sqrt{d}(M\bx)_i + s_i^\bx \right) \left(-\sqrt{d}(M\by)_i + s_i^\by \right)  \\
    &= d(M\bx)_i(M\by)_i - s_i^\by\sqrt{d}(M\bx)_i - s_i^\bx\sqrt{d}(M\by)_i + s_i^\bx s_i^\by;
\end{align*}
indeed, for every $i \in [d]$ and $\bz \in \{\bx, \by\}$, the equality $t_i^{\bz} = - \sqrt{d}(M\bz)_i + s_i^\bz$ holds, where $s_i^\bz \in [-4,4]$. Set 
$$
\mathcal{N} := \sum_{i=1}^{d}\frac{\sqrt{d} \epsilon s^{\by}_i (M\bx)_i}{(1-\epsilon^2)n}
    + \sum_{i=1}^{d}\frac{\sqrt{d} \epsilon s^{\bx}_i (M\by)_i}{(1-\epsilon^2)n} - \sum_{i=1}^{d}\frac{\epsilon s^{\bx}_i s^{\by}_i}{(1-\epsilon^2)n}
$$
and note that 
\begin{align*}
    \sum_{i=1}^{d} D_i &\leq
     - \sum_{i=1}^{d} \frac{(t_i^\bx)^2
    + (t_i^\by)^2}{2n}
    + \sum_{i=1}^{d} \frac{d \epsilon (M\bx)_i (M\by)_i}{(1-\epsilon^2)n}
    - \mathcal{N}
    \\
    &\leq
    - \frac{1}{2n} \left( \sum_{i=1}^{d} (t_i^\bx)^2 + \sum_{i=1}^{d} (t_i^\by)^2 \right)
    + 2 \delta^2 | \epsilon \langle M\bx , M\by \rangle | + |\mathcal{N}| 
\end{align*}
then holds, where in the last inequality we use the fact that $(1-\epsilon^2) \geq 1/2$. Additionally,
\begin{align*}
    |\mathcal{N}| &\leq \frac{\sqrt{d}}{(1 - \epsilon^2) n} \left|\sum_{i=1}^d \epsilon s^{\by}_i (M \bx)_i \right| + \frac{\sqrt{d}}{(1 - \epsilon^2) n} \left|\sum_{i=1}^d \epsilon s^{\bx}_i (M \by)_i \right| + \sum_{i=1}^{d}\frac{16 |\epsilon|}{(1-\epsilon^2)n} \\ 
    &\leq 2 n^{-1} \sqrt{d} \left|\epsilon \la M \bx, s^{\by} \ra \right| + 2 n^{-1} \sqrt{d} \left|\epsilon \la M \by, s^{\bx} \ra \right| + O \left(d n^{-1} \right) \\
    &\leq 2 n^{-1} \sqrt{d} \left|\epsilon \la M \bx, s^{\by} \ra \right| + 2 n^{-1} \sqrt{d} \left|\epsilon \la M \by, s^{\bx} \ra \right| + o_d(1),
\end{align*}
where the second inequality holds since $|\epsilon| \leq 1/2$ and thus $(1-\epsilon^2) \geq 1/2$; the last inequality holds since $n = \omega(d)$ by the premise of the lemma; the constant $16$ comes about from the fact that $s_i^\bz \in [-4,4]$ upheld by definition for every $i \in [d]$ and every $\bz \in \{\bx,\by\}$. 

\bigskip
We conclude that
\begin{align} \label{eq::sumDi}
    \sum_{i=1}^d D_i &\leq - \frac{1}{2n} \left( \sum_{i=1}^{d} (t_i^\bx)^2 + \sum_{i=1}^{d} (t_i^\by)^2 \right)+ 2 \delta^2 | \epsilon \langle M\bx , M\by \rangle | \nonumber\\ 
    &+ 2 n^{-1} \sqrt{d} \left|\epsilon \la M \bx, s^{\by} \ra \right| + 2 n^{-1} \sqrt{d} \left|\epsilon \la M \by, s^{\bx} \ra \right| + o_d(1).
\end{align}

\medskip\noindent
{\bf Estimating $\boldsymbol{\sum E_i}$.} Start by writing 
\begin{align}
        \sum_{i=1}^d \left(\frac{(k_i^{\bx} + k_i^{\by})^3}{(\alpha n)^2} + \frac{(k_i^{\bx} - k_i^{\by})^3}{((1-\alpha) n)^2} \right)
        &\leq \frac{c_1 \left| \sum_{i=1}^d (t_i^{\bx})^3 \right| + c_2 \left| \sum_{i=1}^d (t_i^{\by})^3 \right|}{n^2} \nonumber \\
        &+ \frac{c_3 \left|\sum_{i=1}^d (t_i^{\bx})^2 t_i^{\by} \right| + c_4 \left|\sum_{i=1}^d t_i^{\bx} (t_i^{\by})^2 \right|}{n^2},
        \label{eq:Ei1}
\end{align}
where $c_1, c_2, c_3,$ and $c_4$ are some constants, depending only on $\alpha$ (recall that $1/4 \leq \alpha \leq 3/4$ holds by Claim~\ref{claim:abc}(a) and since $|\varepsilon| \leq 1/2$ by the premise of the lemma). As in~\eqref{eq::sumtix3}, it follows by Lemma~\ref{lem:concentrationS}(ii) that 
$$
\left|\sum_{i=1}^d (t^{\bx}_i)^3 \right| \leq d^{3/2} \|M \bx\|_{\infty}^2 |\la M \bx, \bold{1} \ra| + o(n^2) = O \left(d^{3/2} \log d \cdot |\la M \bx, \bold{1} \ra| \right) + o(n^2)
$$
and, similarly,
$$
\left|\sum_{i=1}^d (t^{\by}_i)^3 \right| \leq d^{3/2} \|M \by\|_{\infty}^2 |\la M \by, \bold{1} \ra| + o(n^2) = O \left(d^{3/2} \log d \cdot |\la M \by, \bold{1} \ra| \right) + o(n^2).
$$
An analogous argument shows that
$$
\left|\sum_{i=1}^d (t^{\bx}_i)^2 t^{\by}_i \right| \leq d^{3/2} \|M \bx\|_{\infty}^2 |\la M \by, \bold{1} \ra| + o(n^2) = O \left(d^{3/2} \log d \cdot |\la M \by, \bold{1} \ra| \right) + o(n^2)
$$
and
$$
\left|\sum_{i=1}^d t^{\bx}_i (t^{\by}_i)^2 \right| \leq d^{3/2} \|M \by\|_{\infty}^2 |\la M \bx, \bold{1} \ra| + o(n^2) = O \left(d^{3/2} \log d \cdot |\la M \bx, \bold{1} \ra| \right) + o(n^2).
$$

It thus follows by~\eqref{eq:Ei1} that
\begin{align} \label{eq:t3l}
    \sum_{i=1}^{d} E_i &= \sum_{i=1}^d \Theta \left(\frac{(k_i^{\bx} + k_i^{\by})^3}{(\alpha n)^2} + \frac{(k_i^{\bx} - k_i^{\by})^3}{((1-\alpha) n)^2} \right) \nonumber \\
    &= O \left(n^{-2} d^{3/2} \log d \right) \left( |\la M \bx, \bold{1} \ra| +  |\la M \by, \bold{1} \ra| \right) + o(1).
\end{align}

\medskip\noindent
{\bf Estimating $\boldsymbol{\sum F_i}$.} It follows by~\eqref{eq::sumtix} that  
$$
\left| \sum_{i=1}^d t_i^\bx \right| \leq \sqrt{d} |\la M \bx, \bold{1} \ra| + o(n) \quad \text{and}\quad \left| \sum_{i=1}^d t_i^\by \right| \leq \sqrt{d} |\la M \by, \bold{1} \ra| + o(n).
$$
both hold. 
Hence,
\begin{align} \label{eq::Fi}
\sum_{i=1}^d F_i &= \sum_{i=1}^d o\left(\frac{k_i^\bx + k_i^{\by}}{\alpha n} + \frac{k_i^\bx - k_i^{\by}}{(1 - \alpha) n}\right) = o \left(\frac{\left| \sum_{i=1}^d t_i^\bx \right|}{n} \right) + o \left(\frac{\left| \sum_{i=1}^d t_i^\by \right|}{n} \right) \nonumber \\
&= o \left(n^{-1} \sqrt{d} \right) \left( |\la M \bx, \bold{1} \ra| + |\la M \by, \bold{1} \ra| \right) + o(1).
\end{align}

\bigskip
We conclude the proof by noticing that combining~\eqref{eq::sumDi}, ~\eqref{eq:t3l}, and~\eqref{eq::Fi} implies~\eqref{eq:DiEiFi}.
\end{proof}

\medskip

Following Lemma~\ref{PxyBeta}, we turn to estimate the terms $\exp \left(- \delta_{\bx} - \delta_{\by} \right)$ and $\beta(\bx, \by)$; in fact, estimations of these in expectation suffices. In the sequel, we prove the following two lemmas. 

\begin{lemma} \label{lem::expDelta}
If $n=\omega(d\log d)$, then
$$
\mathbb{E}_{\bx \sim \cS} \left[\exp \left(\delta_{\bx} \right) \right] = 1 + o_d(1).
$$
\end{lemma}

\begin{lemma} \label{lemma:ExpBeta}
If $n=\omega(d\log d)$, then
    $$
        \mathbb{E}_{\bx,\by \sim \cS}[\beta(\bx,\by)] = 1+o_d(1).
    $$
\end{lemma}

\noindent
Postponing the proofs of Lemmas~\ref{lem::expDelta} and~\ref{lemma:ExpBeta} until later, we first deduce~\eqref{eq:new-goal} from these and thus conclude the proof of Claim~\ref{clm:main}; this deduction is captured in the following lemma.   

\begin{lemma}
    If $n = \omega(d \log d)$, then
    $$
    \mathbb{E}_{\bx,\by\sim \cS}[P_{\bx,\by}] \leq (1+o_d(1))\mathbb{E}_{\bx,\by\sim \cS}[P_\bx P_\by].
    $$
\end{lemma}

\begin{proof}
    First, note that
    \begin{align*}
        \mathbb{E}_{\bx,\by\sim \cS}[P_{\bx}P_{\by}]
        &= \mathbb{E}_{\bx,\by\sim \cS} \left[(1 + o_d(1)) p^2 \exp \left(\delta_{\bx} + \delta_{\by}\right) \right]
        = (1 + o_d(1)) p^2 \cdot \mathbb{E}_{\bx,\by\sim \cS} \left[\exp \left(\delta_{\bx} \right) \cdot \exp \left(\delta_{\by}\right) \right] \\
        &= (1 + o_d(1)) p^2 \cdot \mathbb{E}_{\bx,\by\sim \cS} \left[\exp \left(\delta_{\bx} \right) \right] \cdot  \mathbb{E}_{\bx,\by\sim \cS} \left[\exp \left(\delta_{\by}\right) \right] \geq (1 - o_d(1)) p^2,
    \end{align*}
    where the first equality holds by Lemma~\ref{lemma:Pxeqp}, the last equality holds since $\bx$ and $\by$ are sampled independently and thus $\exp \left(\delta_{\bx} \right)$ and $\exp \left(\delta_{\by} \right)$ are independent, and the inequality holds by Lemma~\ref{lem::expDelta}.
   
    \bigskip
    Next, let $\mathcal{E}$ denote the event that $|\varepsilon| > 1/2$.
    Since $p$ is fixed, it follows that
    \begin{align} \label{eq:F2}
        \mathbb{E}_{\bx,\by\sim \cS}[P_{\bx,\by}] &\leq
        \mathbb{E}_{\bx,\by\sim \cS}[P_{\bx,\by} | \bar{\mathcal{E}}]
        + \mathbb{E}_{\bx,\by\sim \cS}[P_{\bx,\by} | \mathcal{E}] \cdot \Pr_{\bx,\by\sim \cS}[\mathcal{E}] \nonumber \\
        &\leq \mathbb{E}_{\bx,\by\sim \cS}[P_{\bx,\by} | \bar{\mathcal{E}}]
        + \mathbb{E}_{\bx,\by\sim \cS}[(1+o_d(1)) p \cdot \exp(\delta_{\bx})|\ \mathcal{E}] \cdot \Pr_{\bx,\by\sim \cS}[\mathcal{E}]
        \nonumber \\
        &= \mathbb{E}_{\bx,\by\sim \cS}[P_{\bx,\by} | \bar{\mathcal{E}}]
        + (1+o_d(1)) p^2 \cdot p^{-1} \cdot \mathbb{E}_{\bx \sim \cS}[\exp(\delta_{\bx})] \cdot \Pr_{\bx,\by\sim \cS}[\mathcal{E}] \nonumber\\
        &\leq \mathbb{E}_{\bx,\by\sim \cS}[P_{\bx,\by} | \bar{\mathcal{E}}]
        + (1+o_d(1)) p^2 \cdot p^{-1} \cdot \Pr_{\bx,\by\sim \cS}[\mathcal{E}] 
    \end{align}
    where the second inequality holds since $P_{\bx,\by} \leq \min \{P_\bx, P_\by\} \leq (1+o_d(1)) p \cdot \exp \left(\delta_{\bx} \right)$ by Lemma~\ref{lemma:Pxeqp}, the equality holds since, once $\by$ becomes irrelevant, so does the event $\mathcal{E}$, and the last inequality holds by Lemma~\ref{lem::expDelta}.
   
    \medskip
    Given any vector $\by \in \{-1, 1\}^n$, note that $n^{-1/2} \by \in \mathbb{S}^{n-1}$. It thus follows by Lemma~\ref{lem:concentrationS}(iii) that
    \begin{align*}
        \Pr_{\bx\sim \cS} \left[|n^{-1} \langle \bx, \by \rangle| > 1/2 \right]
        &= \Pr_{\bx\sim \cS} \left[\left|\left\langle \bx, n^{-1/2}\by \right\rangle \right| > \sqrt{n}/2 \right] \\
        &\leq d^{C_{\ref{lem:concentrationS}}} \cdot \exp\left(- n/36 \right) \leq \exp \left(- n/40 \right),
    \end{align*}
    holds for every $\by \in \{-1,1\}^n$. It then follows by the law of total probability that
    \begin{equation} \label{eq::Eexp-n}
         \Pr_{\bx,\by\sim \cS}[\mathcal{E}] = \Pr_{\bx,\by \sim \cS}[|\varepsilon|>1/2] \leq \exp(- \Theta(n)).
    \end{equation}
   
    On the other hand, since $p = \left(\frac{8}{\pi n}\right)^{d/2}\cdot\exp\left(- r^2 \delta^2/2 \right)$ and $n = \omega \left(d \log d \right)$,
    it follows that
    \begin{align} \label{eq::p1}
        p^{-1} = \left(\frac{\pi n}{8}\right)^{d/2}\cdot \exp(r^2 \delta^2/2)
        \leq \exp\left( O\left(d \log n\right) + O\left(d^2/n\right) \right)
        = \exp\left( o(n) \right).
    \end{align}
    It then follows by \eqref{eq:F2}, \eqref{eq::Eexp-n} and~\eqref{eq::p1} that
    \begin{align} \label{eq::PxyNotE}
        \mathbb{E}_{\bx,\by\sim \cS}[P_{\bx,\by}] &\leq
        \mathbb{E}_{\bx,\by\sim \cS}[P_{\bx,\by} | \bar{\mathcal{E}}]
        + p^2 \cdot o_n(1).
    \end{align}

    \bigskip
    Conditioning on $\bar{\mathcal{E}}$ and recalling that $n = \omega \left(d \log d \right)$ holds by the premise of the lemma, we obtain
    \begin{equation} \label{eq::conditionNotE}
        P_{\bx,\by} \leq (1+o_d(1)) P_\bx P_\by \cdot \beta(\bx,\by) \cdot \exp \left(- \delta_{\bx} - \delta_{\by} \right)
        = (1+o_d(1)) \cdot p^2 \cdot \beta(\bx, \by),
    \end{equation}
    where the inequality holds by Lemma~\ref{PxyBeta} and the equality holds by Lemma~\ref{lemma:Pxeqp}.
   
    Combining~\eqref{eq::PxyNotE} and~\eqref{eq::conditionNotE} we conclude that
        \begin{align*}
        \mathbb{E}_{\bx,\by\sim \cS}[P_{\bx,\by}] &\leq
        \mathbb{E}_{\bx,\by\sim \cS}[(1 + o_d(1)) \cdot p^2 \cdot \beta(\bx,\by) | \bar{\mathcal{E}}]
        + p^2 \cdot o_n(1) \\
        &\leq
        (1 + o_d(1)) \cdot p^2 \cdot \mathbb{E}_{\bx,\by\sim \cS}[\beta(\bx,\by) | \bar{\mathcal{E}}]
        + p^2 \cdot o_n(1) \\
        &\leq
        (1 + o_d(1)) \cdot p^2 \cdot \mathbb{E}_{\bx,\by\sim \cS}[\beta(\bx,\by)]
        + p^2 \cdot o_n(1) \\
        &= (1 + o_d(1)) \cdot p^2 + p^2 \cdot o_n(1)
        \label{eq:Fin1} \\
        &= (1+o_d(1))\cdot p^2,
    \end{align*}
    where the last inequality holds since $\mathbb{P}[\bar{\mathcal{E}}] = 1 - o(1)$ and $\beta$ is non-negative, and the first equality holds by Lemma~\ref{lemma:ExpBeta}.
\end{proof}

It remains to prove Lemmas~\ref{lem::expDelta} and~\ref{lemma:ExpBeta}. The following result facilitates our proofs of these lemmas; proof of the latter is essentially that seen for~\cite[Lemma~2.4]{BJMSS} and is thus omitted. 

\begin{lemma} \label{lemma:ExpX} {\em~\cite[Lemma~2.4]{BJMSS}}
    Let $X$ be a non-negative random variable which satisfies
    $$
        \Pr[X>t] \leq d^{\xi_1} \cdot e^{- t^2/\xi_2} \textrm{ for any } t > 0,
    $$
    for some positive constants $\xi_1$ and $\xi_2$. Then, for any $\lambda = \kappa \sqrt{\log d}$, where $\kappa \geq 2 \sqrt{\xi_1 \xi_2}$, we have
    $$
       \mathbb{E} \left[\exp \left(X^2/\lambda^2 \right) \right] \leq 1 + 4 \xi_1 \xi_2/\kappa^2 + o_d(1).
    $$
\end{lemma}
\begin{proofof}{Lemma~\ref{lem::expDelta}}
Starting with the upper bound, given any $\bx \in \cS$, let
$$
Z := Z(\bx) = \left(c n^{-2} d^{3/2} \log d+ \zeta n^{-1} \sqrt{d} \right) |\la M \bx, \bold{1} \ra|,
$$
where $c$ is a constant and $\zeta = o(1)$ are chosen so as to ensure that $\left|\delta_{\bx} \right| \leq Z$ holds. If $\left| \left\la M\bx, d^{-1/2} \bold{1} \right\ra \right| < 1$, then the assumption that $n = \omega(d \log d)$ implies that $Z = o_d(1)$ and thus
$$
\mathbb{E}_{\bx \sim \cS} \left[\exp \left(\delta_{\bx} \right) \right] \leq \mathbb{E}_{\bx \sim \cS} \left[\exp \left(Z \right) \right]  \leq 1 + o_d(1).
$$
Assume then that $\left| \left\la M\bx, d^{-1/2} \bold{1} \right\ra \right| \geq 1$. In this case
\begin{align*} 
Z = \left(c n^{-2} d^2 \log d+ \zeta n^{-1} \sqrt{d} \right) \left| \left\la M \bx, d^{- 1/2} \bold{1} \right\ra \right| \leq \left(c n^{-2} d^2 \log d+ \zeta n^{-1} \sqrt{d} \right) \left| \left\la M \bx, d^{- 1/2} \bold{1} \right\ra \right|^2.
\end{align*}
Writing $\gamma := \left\la M \bx, d^{-1/2} \bold{1} \right\ra$ and $\lambda := \sqrt{n/d}$, it then follows that
$$
Z \leq \left(c n^{-2} d^2 \log d + \zeta n^{-1} d \right) |\gamma|^2 \leq n^{-1} d |\gamma|^2 = |\gamma|^2/\lambda^2,
$$
where the second inequality holds since $\zeta = o(1)$ and since $n = \omega(d \log d)$ implies that $n^{-2} d^2 \log d = o \left(n^{-1} d \right)$. 

Then,
\begin{align*}
\Pr_{\bx\sim \cS} [|\gamma| > t] &= \Pr_{\bx\sim \cS} \left[|\langle M \bx, d^{-1/2} \bold{1} \rangle| > t \right] \leq d^{C_{\ref{lem:concentrationS}}} \exp \left(- t^2/8 \right)
\end{align*}
holds for any $t > 0$, by Lemma~\ref{lem:concentrationS}(iii). We conclude that the non-negative random variable $X := |\gamma|$ satisfies the conditions of Lemma~\ref{lemma:ExpX}, implying that
\begin{equation}\label{eq::EdeltaXUB_1} 
\mathbb{E}_{\bx \sim \cS} \left[\exp \left(\delta_{\bx} \right) \right] \leq  \mathbb{E}_{\bx \sim \cS} \left[\exp \left(\left|\delta_{\bx} \right| \right) \right] \leq \mathbb{E}_{\bx,\by \sim \cS}[\exp(Z)] \leq \mathbb{E}_{\bx,\by \sim \cS} \left[\exp \left(X^2/\lambda^2 \right) \right] = 1+o_d(1), 
\end{equation}
where the last equality holds since $n = \omega(d \log d)$ by the premise of the lemma and thus $\lambda = \omega(\sqrt{\log d})$.

\medskip
Next, we prove that $\mathbb{E}_{\bx, \by \sim \cS} \left[\exp \left(\delta_{\bx} \right) \right] \geq 1 - o_d(1)$. 
Let $Y = \exp \left(\left|\delta_{\bx} \right| \right)$; note that $Y$ is positive. Let $g : \mathbb{R}^+ \to \mathbb{R}^+$ be defined by $g(x) = 1/x$; note that $g$ is convex. It thus follows by Jensen's inequality\footnote{Jensen's inequality, see, e.g.~\cite{Durrett}, asserts that $g(\mathbb{E}(X)) \leq \mathbb{E}(g(X))$ holds whenever $X$ is a random variable and $g$ is a convex function.} that
\begin{align*}
\mathbb{E}_{\bx \sim \cS} \left[\exp \left(\delta_{\bx} \right) \right] &\geq \mathbb{E}_{\bx \sim \cS} \left[\exp \left(- \left|\delta_{\bx} \right| \right) \right] = \mathbb{E}_{\bx \sim \cS} [g(Y)] \geq g \left(\mathbb{E}_{\bx \sim \cS} [Y] \right) \\
&= \frac{1}{\mathbb{E}_{\bx \sim \cS} \left[\exp \left(\left|\delta_{\bx} \right| \right) \right]} \geq \frac{1}{1 + o_d(1)} = 1 - o_d(1),
\end{align*}
where the last inequality holds by~\eqref{eq::EdeltaXUB_1}.
\end{proofof}

\begin{proofof}{Lemma~\ref{lemma:ExpBeta}}
Let
$$
Z_1 := Z_1(\bx, \by) = d\epsilon^2 + 2 \delta^2 |\epsilon \langle M\bx, M\by \rangle| + 2 n^{-1} \sqrt{d} \left|\epsilon \la M \bx, s^{\by} \ra \right| + 2 n^{-1} \sqrt{d} \left|\epsilon \la M \by, s^{\bx} \ra \right|
$$
and let
$$
Z_2 := Z_2(\bx, \by) = \left(c n^{-2} d^{3/2} \log d + \zeta n^{-1} \sqrt{d} \right) \left( |\la M \bx, \bold{1} \ra| + |\la M \by, \bold{1} \ra| \right),
$$
where $c$ is a constant and $\zeta = o(1)$.

Set 
\begin{align*}
\bar\varepsilon &:= \bar\varepsilon(\bx, \by) = \left\langle \bx, n^{-1/2}\by \right\rangle, \\
\mu & := \mu(\bx, \by) = \left\langle M\bx, \|M \by\|_2^{-1} M\by \right\rangle,\\ 
\eta_1 &:= \eta_1(\bx, \by) = \left\la M \bx, \|s^{\by}\|_2^{-1} s^{\by} \right\ra,\\ 
\eta_2 &:= \eta_2(\bx, \by) = \left\la M \by, \|s^{\bx}\|_2^{-1} s^{\bx} \right\ra.
\end{align*}
Recalling that $\delta = \sqrt{d/n}$, 
we obtain
\begin{align} \label{eq::Z1}
    Z_1 &= \frac{d}{n} \cdot |\bar\varepsilon|^2 + 2 \frac{d \|M \by\|_2}{n^{3/2}} |\bar{\epsilon}| |\mu| + 2 \frac{\sqrt{d} \|s^{\by} \|_2}{n^{3/2}} |\bar{\epsilon}| |\eta_1| + 2 \frac{\sqrt{d} \|s^{\bx} \|_2}{n^{3/2}} |\bar{\epsilon}| |\eta_2| \nonumber \\ 
    &\leq \frac{d}{n} \cdot |\bar\varepsilon|^2 + O\left( \frac{d^{3/2}}{n^{3/2}} \right) |\bar{\epsilon}| |\mu| + 8 \frac{d}{n^{3/2}} |\bar{\epsilon}| |\eta_1| + 8 \frac{d}{n^{3/2}} |\bar{\epsilon}| |\eta_2| \nonumber \\
    &\leq \left( \frac{d}{n} + O \left(\frac{ d^{3/2}}{n^{3/2}} \right) + \frac{16 d}{n^{3/2}} \right) (|\bar\varepsilon| + |\mu| + |\eta_1| + |\eta_2|)^2 \nonumber \\ 
    &\leq 2 n^{-1} d (|\bar\varepsilon| + |\mu| + |\eta_1| +  |\eta_2|)^2, 
\end{align}
where the first inequality holds by Lemma~\ref{lem:concentrationS}(i) and by Lemma~\ref{lem::choosesX}(1), and the last inequality holds since $n = \omega(d)$.


Let $\gamma = \left|\left\la M \bx, d^{- 1/2} \bold{1} \right\ra \right| + \left|\left\la M \by, d^{- 1/2} \bold{1} \right\ra \right|$. If $\gamma < 1$, then $Z_2 = o_d(1)$; otherwise
\begin{align} \label{eq::Z2}
Z_2 &= \left(c n^{-2} d^2 \log d + \zeta n^{-1} d \right) \left( \left| \left\la M \bx, d^{- 1/2} \bold{1} \right\ra \right| + \left| \left\la M \by, d^{- 1/2} \bold{1} \right\ra \right| \right) \nonumber \\ 
&= O \left(n^{-2} d^2 \log d + \zeta n^{-1} d \right) \gamma^2 \leq n^{-1} d \gamma^2,
\end{align}
where the last inequality holds since $\zeta = o(1)$ and since $n = \omega(d \log d)$ implies that $n^{-2} d^2 \log d = o \left(n^{-1} d \right)$. 

Let $\lambda := \sqrt{n/(2d)}$. Combining~\eqref{eq::Z1} and~\eqref{eq::Z2} we obtain
\begin{align} \label{eq::Z1plusZ2}
Z_1 + Z_2 &\leq 2 n^{-1} d (|\bar\varepsilon| + |\mu| + |\eta_1| + |\eta_2|)^2 + n^{-1} d \gamma^2 \leq 2 n^{-1} d (|\bar\varepsilon| + |\mu| + |\eta_1| + |\eta_2| + \gamma)^2 \nonumber \\
&\leq (|\bar\varepsilon| + |\mu| + |\eta_1| + |\eta_2| + \gamma)^2/\lambda^2.
\end{align}

Given any vector $\by \in \{-1, 1\}^n$, note that $n^{-1/2} \by \in \mathbb{S}^{n-1}$. It thus follows by Lemma~\ref{lem:concentrationS}(iii) that
\begin{align*}
\Pr_{\bx\sim \cS} \left[\left| \left\langle \bx, n^{-1/2}\by \right\rangle \right| > t \right] \leq  d^{C_{\ref{lem:concentrationS}}} \exp \left(- t^2/9 \right)
\end{align*}
holds for any $t > 0$. It then follows by the law of total probability that
\begin{equation} \label{eq::barEpsilon}
    \Pr[|\bar\varepsilon| > t] = \Pr_{\bx, \by \sim \cS} \left[|\langle \bx, n^{-1/2}\by \rangle| > t \right] \leq d^{C_{\ref{lem:concentrationS}}} \exp \left(-t^2/9 \right) \textrm{ for any } t > 0.
\end{equation}

Similarly, since $\|M \by\|_2^{-1} M\by \in \mathbb{S}^{d-1}$ and $\|s^{\by}\|_2^{-1} s^{\by} \in \mathbb{S}^{d-1}$ hold for every $\by \in \supp \cS$, $\|s^{\bx}\|_2^{-1} s^{\bx} \in \mathbb{S}^{d-1}$ holds for every $\bx \in \supp \cS$, and $d^{- 1/2} \bold{1} \in \mathbb{S}^{d-1}$, it follows that
\begin{align*}
    \Pr[|\mu| > t] &\leq d^{C_{\ref{lem:concentrationS}}} \exp \left(-t^2/9 \right) \textrm{ for any } t > 0 \\
    \Pr[|\eta_1| > t] &\leq d^{C_{\ref{lem:concentrationS}}} \exp \left(-t^2/9 \right) \textrm{ for any } t > 0 \\
    \Pr[|\eta_2| > t] &\leq d^{C_{\ref{lem:concentrationS}}} \exp \left(-t^2/9 \right) \textrm{ for any } t > 0 \\
    \Pr[\gamma > t] &\leq d^{C_{\ref{lem:concentrationS}}} \exp \left(-t^2/9 \right) \textrm{ for any } t > 0.
\end{align*}
We conclude that the non-negative random variable $X := |\bar\epsilon| + |\mu| + |\eta_1| + |\eta_2| + \gamma$ satisfies the conditions of Lemma~\ref{lemma:ExpX}, implying that
\begin{align*}
    \mathbb{E}_{\bx,\by \sim \cS}[\beta(\bx,\by)] = \mathbb{E}_{\bx,\by \sim \cS}[\exp(Z_1 + Z_2)] \leq \mathbb{E}_{\bx,\by \sim \cS} \left[\exp \left(X^2/\lambda^2 \right) \right] = 1+o_d(1),
\end{align*}
where the last equality holds since $n = \omega(d \log d)$ by the premise of the lemma and thus $\lambda = \omega(\sqrt{\log d})$.
\end{proofof}

\section{Concluding remarks} \label{sec::concluding}

We have proved that $\DISC(M + R/\sqrt{d}) = O(d^{-1/2})$ holds asymptotically almost surely, whenever $M \in \mathbb{R}^{d \times n}$ is Koml\'os, $R \in \mathbb{R}^{d \times n}$ is Rademacher, $d = \omega(1)$, and $n = \omega(d \log d)$. 

As stated by Bansal, Jiang, Meka, Singla, and Sinha~\cite[Section~3]{BJMSS}, considering other distributions for the entries of the random perturbation and specifically discrete ones is of high interest as well. In view of the aforementioned result in~\cite{ANW22} pertaining to the discrepancy of Bernoulli matrices, as well as the proclaimed $n = \omega(d^2)$ bound attained in~\cite{BJMSS} in the smoothed setting with Bernoulli noise, the following question seems to be a natural next step. 

\begin{question}\label{que:Ber}
Let $d =\omega(1)$ and $n = \omega(d \log d)$ be integers, and set $p:= p(n,d) >0$. Is it true that $\DISC(M +R) = O(1)$ holds a.a.s. whenever $M \in \mathbb{R}^{d \times n}$ is a Koml\'os matrix and $R \in \mathbb{R}^{d \times n}$ is a random matrix with each of its entries being an independent copy of $\Theta \left((pd)^{-1/2} \right)\mathrm{Ber}(p)$? 
\end{question}

\noindent
It is conceivable that for certain ranges of $p$, the $O(1)$ bound on the discrepancy, appearing in Question~\ref{que:Ber}, can be replaced with $1/\mathrm{poly}(d)$.

\bibliographystyle{amsplain}
\bibliography{Komlos-lit}

\appendix

\section{Proofs of Lemmas~\ref{lem:single-even} and~\ref{lem:joint-even}}\label{app:rud}

Prior to proving Lemmas~\ref{lem:single-even} and~\ref{lem:joint-even}, we collect several auxiliary results. 

\begin{observation}\label{obs:parity->diff-even}
Let $n \in \mathbb{N}$ be even and let $\bx,\by \in \{-1,1\}^n$ satisfying $\#_1(\bx) \equiv \#_1(\by) \pmod{2}$ be given. Then, $|\Diff(\bx,\by)|$ is even. 
\end{observation}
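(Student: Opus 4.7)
The argument will be a short parity calculation obtained by partitioning $[n]$ according to the joint value $(\bx_i,\by_i) \in \{-1,+1\}^2$. Writing $a_{\sigma,\tau} := |\{i \in [n] : \bx_i=\sigma,\; \by_i=\tau\}|$ for $\sigma,\tau \in \{-1,+1\}$, I would record the three elementary identities
\[
\#_1(\bx)=a_{+,+}+a_{+,-}, \qquad \#_1(\by)=a_{+,+}+a_{-,+}, \qquad |\Diff(\bx,\by)|=a_{+,-}+a_{-,+}.
\]
Subtracting the first from the second, the hypothesis $\#_1(\bx)\equiv \#_1(\by) \pmod 2$ becomes $a_{-,+}-a_{+,-}\equiv 0 \pmod 2$; equivalently, $a_{+,-}$ and $a_{-,+}$ share a common parity, so their sum $|\Diff(\bx,\by)|=a_{+,-}+a_{-,+}$ must be even, which is precisely the conclusion.

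I do not anticipate any obstacle: this is a one-line bookkeeping argument, and in fact the parity of $n$ plays no role in the proof. The hypothesis $n$ even is presumably retained for compatibility with the setup of Section~\ref{sec:col-cnt}, where, in the proof of Lemma~\ref{lem:joint-even}, the observation will be used to ensure that the ``agreement'' and ``disagreement'' blocks of $[n]$ (of respective sizes $\alpha n$ and $(1-\alpha) n$) have sizes of the convenient parity required to split the joint event $\{\la \br,\bx\ra=2t_x,\; \la \br,\by\ra=2t_y\}$ into a product of two independent sub-sums that can be counted via Observation~\ref{obs:sum-2t}.
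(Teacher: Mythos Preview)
Your proof is correct and uses the same four-way partition of $[n]$ as the paper (the paper's $A,B,C,D$ are your $a_{+,+},a_{-,-},a_{+,-},a_{-,+}$). The only difference is stylistic: the paper argues by contradiction and invokes the evenness of $n$ to fix the parities of $A$ and $B$, whereas your direct subtraction $\#_1(\by)-\#_1(\bx)=a_{-,+}-a_{+,-}$ shows immediately that $a_{+,-}$ and $a_{-,+}$ share a parity, without ever needing $n$ to be even. Your observation that the hypothesis on $n$ is superfluous here is correct.
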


\begin{proof}
Let $A := A(\bx,\by) = |\{i \in [n] : \bx_i = \by_i = 1\}|$, let $B := B(\bx,\by) = |\{i \in [n] : \bx_i = \by_i = -1\}|$, let $C := C(\bx,\by) = |\{i \in [n] : \bx_i = 1, \by_i = -1\}|$, and let $D := D(\bx,\by) = |\{i \in [n] : \bx_i = -1, \by_i = 1\}|$. Suppose for a contradiction that $|\Diff(\bx,\by)|$ is odd. Since $|\Diff(\bx,\by)| = C + D$, we may assume without loss of generality that $C$ is even and $D$ is odd. Since, moreover, $n = A + B + C + D$ is even, we may further assume without loss of generality that $A$ is even and $B$ is odd. It then follows that $\#_1(\bx) = A + C$ is even, whereas $\#_1(\by) = A + D$ is odd; this contradicts the premise of the observation and concludes its proof.
\end{proof}

\begin{lemma}\label{lem:even-diff-same-sum}
Let $n \in \mathbb{N}$, let $k \in \mathbb{Z}$, and let $\bu,\bv \in \{-1,1\}^n$ be vectors satisfying $\sum_{i=1}^n \bu_i = 2k = \sum_{i=1}^n \bv_i$. Then, $|\Diff(\bv,\bu)|$ is even. 
\end{lemma}

\begin{proof}
Set
$$
O = \left\{ i\in \Diff(\bv,\bu): \bu_i =1\right\}\; \text{and}\; M = \left\{i \in \Diff(\bv,\bu): \bu_i = -1\right\}.
$$  
Then
\begin{align*}
2k &= \sum_{i=1}^n \bv_i  = \sum_{i \notin \Diff(\bv,\bu)} \bu_i + \sum_{i \in O} \left( \bu_i-2 \right) + \sum_{i \in M} \left( \bu_i+2 \right) \\
& = \sum_{i=1}^n \bu_i - 2|O| +2|M| = 2k - 2|O| + 2|M|.
\end{align*}
It follows that $|O| = |M|$, and thus $|\Diff(\bv,\bu)| = |O| + |M|$ is even.
\end{proof}

\begin{lemma}\label{lem:diff-even}
Let $\bu \in \{-1,1\}^n$ and let $\bv \in \mathcal{E}_n$. If $|\Diff(\bv,\bu)|$ is even, then $\bu \in \mathcal{E}_n$. 
\end{lemma}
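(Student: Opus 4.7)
The plan is to run a short parity count almost identical to the one used in the proof of Observation~\ref{obs:parity->diff-even}. For $\bu, \bv \in \{-1,1\}^n$, partition $[n]$ into the four classes
\[
A = \{i : \bu_i = \bv_i = 1\}, \quad B = \{i : \bu_i = \bv_i = -1\}, \quad C = \{i : \bu_i = 1, \bv_i = -1\}, \quad D = \{i : \bu_i = -1, \bv_i = 1\}.
\]
Then $\Diff(\bv,\bu) = C \cup D$, so $|\Diff(\bv,\bu)| = |C| + |D|$; moreover $\#_1(\bv) = |A| + |D|$ and $\#_1(\bu) = |A| + |C|$.

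From these identities one reads off
\[
\#_1(\bu) - \#_1(\bv) = |C| - |D|,
\]
which has the same parity as $|C| + |D| = |\Diff(\bv, \bu)|$. By hypothesis $|\Diff(\bv,\bu)|$ is even, so $\#_1(\bu) \equiv \#_1(\bv) \pmod 2$; combined with $\bv \in \mathcal{E}_n$, i.e.\ $\#_1(\bv)$ even, this yields $\#_1(\bu)$ even, which is exactly $\bu \in \mathcal{E}_n$.

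There is no real obstacle here: the whole argument is an elementary parity bookkeeping on four disjoint index sets, and the proof should fit comfortably in a few lines once the $A,B,C,D$ partition is introduced.
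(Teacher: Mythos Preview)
Your argument is correct. It is a direct parity computation via the four-cell partition $A,B,C,D$ of $[n]$, exactly parallel to the paper's proof of Observation~\ref{obs:parity->diff-even}; indeed, your identity $\#_1(\bu)-\#_1(\bv)=|C|-|D|\equiv |C|+|D|=|\Diff(\bv,\bu)|\pmod 2$ shows at once that $\#_1(\bu)\equiv\#_1(\bv)\pmod 2$ and $|\Diff(\bv,\bu)|$ even are equivalent, of which the lemma is one direction.

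The paper, by contrast, proves the lemma by induction on $|\Diff(\bv,\bu)|$: it handles $|\Diff(\bv,\bu)|=0,2$ directly (using $\#_1(\bu)=\#_1(\bv)-(\bv_i+\bv_j)\in\{\#_1(\bv)-2,\#_1(\bv),\#_1(\bv)+2\}$), and for the step flips two differing coordinates of $\bv$ to reduce the distance by $2$. Your route is shorter and more transparent, and it makes the biconditional with Observation~\ref{obs:parity->diff-even} explicit; the paper's inductive proof is a bit more hands-on but yields no extra information. Either way the content is the same elementary parity fact.
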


\begin{proof}
The proof is via induction on $|\Diff(\bv,\bu)|$. If $|\Diff(\bv,\bu)| = 0$, then $\bu = \bv \in \mathcal{E}_n$. Suppose then that $|\Diff(\bv,\bu)| = 2$ and let $i,j \in [n]$ be the (sole) two distinct indices over which $\bu$ and $\bv$ differ. The equality $\#_1(\bu) = \#_1(\bv) - (\bv_i + \bv_j)$ coupled with the assumption that $\#_1(\bv)$ is even as well as the fact that $\bv_i + \bv_j \in \{-2,0,2\}$, imply that $\#_1(\bu)$ is even as well and thus $\bu \in \mathcal{E}_n$ as required.  

For the induction step, consider $\bv \in \mathcal{E}_n$ and $\bu \in \{-1,1\}^n$ satisfying $|\Diff(\bv,\bu)| = 2m +2$ for some positive integer $m$ and assume that the claim holds true for any pair of vectors $\bx \in \mathcal{E}_n$ and $\by \in \{-1,1\}^n$ satisfying $|\Diff(\bx,\by)| = 2k$ for some positive integer $k \leq m$. Let $1 \leq i < j \leq n$ be any two distinct indices for which $\bv_i \neq \bu_i$ and $\bv_j \neq \bu_j$ both hold. The vector
$$
\bv' := (\bv_1,\ldots,\bv_{i-1},-\bv_i,\bv_{i+1},\ldots,\bv_{j-1},-\bv_j,\bv_{j+1},\ldots,\bv_n)
$$
satisfies $|\Diff(\bv,\bv')| = 2$; hence, $\bv' \in \mathcal{E}_n$ holds by the induction hypothesis. Since, moreover, $|\Diff(\bu,\bv')| = 2m$, it follows by the induction hypothesis that $\bu \in \mathcal{E}_n$. This concludes the proof of the lemma. 
\end{proof}

We are now in position to prove the first main result of this section, namely Lemma~\ref{lem:single-even}. 

\begin{proofof}{Lemma~\ref{lem:single-even}}
A vector $\br \in \{-1, 1\}^n$ is said to be {\sl valid} if $\br \in \mathcal{E}_n$ and $\la \br,\bx \ra = 2k$. Since $|\mathcal{E}_n| = 2^{n-1}$, it suffices to prove that there are $\binom{n}{n/2+k}$ valid vectors. In light of~\eqref{eq:sum-2t}, it remains to prove that there is a bijection from the set of valid vectors to the set $S_k$. 

Given a valid vector $\br$ (such a vector exists by the premise of the lemma), define $\phi(\br) := (\br_1\bx_1,\ldots,\br_n\bx_n) \in \{-1,1\}^n$. The validity of $\br$ implies that $\sum_{i=1}^n \phi(\br)_i = 2k$ and thus $\phi(\br) \in S_k$. To see that $\phi(\cdot)$ is injective, note that given two different valid vectors $\br$ and $\br'$, there exists an index $i \in [n]$ such that $\br_i \neq \br'_i$. As $\bx$ is fixed, this compels that $\phi(\br)_i = \br_i \bx_i \neq \br'_i \bx_i = \phi(\br')_i$ so that $\phi(\br)\neq \phi(\br')$. 

To prove that $\phi(\cdot)$ is surjective, fix $\bv \in S_k$ and define the vector $\by \in \{-1,1\}^n$ whose entries are uniquely determined by the equalities $\bv_i = \by_i \bx_i$, that is, for every $i \in [n]$, if $\bv_i = \bx_i$, then $\by_i = 1$, and otherwise $\by_i = -1$. It is evident that, if $\by$ is valid, then $\bv = \phi(\by)$. Since, moreover, $\bv \in S_k$, it suffices to prove that $\by \in \mathcal{E}_n$. To that end, let $\br$ be an arbitrary valid vector. Since $\sum_{i=1}^n \phi(\br)_i = 2k = \sum_{i=1}^n \bv_i$, it follows by Lemma~\ref{lem:even-diff-same-sum} that $|\Diff(\bv,\phi(\br))|$ is even. Note that $\by_i = \br_i$ whenever $i \notin \Diff(\bv,\phi(\br))$, and $\by_i = -\br_i$ whenever $i \in \Diff(\bv,\phi(\br))$. Consequently, $|\Diff(\by,\br)|$ is even and thus $\by$ is even by Lemma~\ref{lem:diff-even}.
\end{proofof}

We conclude this section with a proof of Lemma~\ref{lem:joint-even}. 

\begin{proofof}{Lemma~\ref{lem:joint-even}}
Since $\#_1(\bx) \equiv \#_1(\by) \pmod{2}$ holds by assumption, it follows by Observation~\ref{obs:parity->diff-even} that $|\Diff(\bx,\by)| = 2m$ for some non-negative integer $m$. The set $\Diff(\bx,\by)$ having even cardinality has two useful implications. The first is that $n - |\Diff(\bx,\by)|$ is an even integer; this on account of $n$ being even by assumption. Using the previously introduced notation $\alpha n := \alpha(\bx, \by) n := n - |\Diff(\bx,\by)|$, we infer that $\alpha n$ and $(1-\alpha)n$ are both even integers.

The second implication is that $\la \bv, \bx \ra = \la \bv, \by \ra + \ell$, for some $\ell \in \{4k : k \in \mathbb{Z}, -m \leq k \leq m\}$, holds for every $\bv \in \{-1, 1\}^n$. Indeed, reaching $\la \bv, \bx \ra$ starting from $\la \bv, \by \ra$ entails iterating over each member of the even-sized set $\Diff(\bx,\by)$ and adding or subtracting two from the current value accumulated thus far. 


If, additionally, $\la \bv, \bx \ra = 2 k_{\bx}$ and $\la \bv, \by \ra = 2 k_{\by}$, where $k_{\bx}$ and $k_{\by}$ are integers, then $k_{\bx} \equiv k_{\by} \pmod{2}$, for indeed 
$$
k_{\bx} - k_{\by} = \frac{\la \bv, \bx \ra - \la \bv, \by \ra}{2} = \frac{\ell}{2} \in 2\mathbb{Z}.
$$

\medskip

Given $\bv \in \{-1,1\}^n$, set
$$
S_1(\bv) := \{i \in [n] \sm \Diff (\bx,\by) : \bv_i\bx_i = 1\}\; \text{and} \; S_2(\bv) := \{i \in \Diff(\bx,\by): \bv_i \bx_i =1\}.
$$
Additionally, set
$$
\bar S_1(\bv) := \Big([n]\sm \Diff(\bx,\by)\Big)\sm S_1(\bv)\; \text{and}\; \bar S_2(\bv) := \Diff(\bx,\by) \sm S_2(\bv).
$$ 
There exist integers $m_1 := m_1(\bv)$ and $m_2 := m_2(\bv)$ such that $|S_1(\bv)| = \frac{\alpha n}{2} + m_1$ and $|S_2(\bv)| = \frac{(1-\alpha)n}{2} + m_2$. If $\la \bv, \bx \ra = 2 k_{\bx}$ for some integer $k_{\bx}$, then 
$$
2 k_{\bx} = \sum_{i \in S_1(\bv)} 1 + \sum_{i \in \bar S_1(\bv)}(-1) + \sum_{i \in S_2(\bv)} 1 + \sum_{i \in \bar S_2(\bv)}(-1) = 2 m_1 + 2 m_2.
$$
Using the definition of $\Diff (\bx,\by)$, an analogous argument shows that if $\la \bv, \by \ra = 2 k_{\by}$ for some integer $k_{\by}$, then $2 k_{\by} = 2 m_1 - 2 m_2$.

Therefore\footnote{Recall that $k_{\bx} \equiv k_{\by} \pmod{2}$ so that $k_{\bx} \pm k_{\by}$ is even.} 
$$
m_1 = \frac{k_{\bx} + k_{\by}}{2} \; \text{ and } \; m_2 = \frac{k_{\bx} - k_{\by}}{2};
$$
in particular, $m_1$ and $m_2$ are independent of $\bv$. We conclude that the number of vectors $\br \in \mathcal{E}_n$ for which $\la \br,\bx \ra = 2 k_{\bx}$ and $\la \br,\by \ra = 2 k_{\by}$ both hold is
$$
\binom{\alpha n}{\frac{\alpha n}{2} + m_1}\binom{(1-\alpha)n}{\frac{(1-\alpha) n}{2} + m_2}.
$$
Since, moreover, $|\mathcal{E}_n| = 2^{n-1}$, it follows that
$$
\Pr \Big[\la \br,\bx \ra = 2 k_{\bx}, \la \br,\by \ra = 2 k_{\by} \Big] = \frac{1}{2^{n-1}}\binom{\alpha n}{\frac{\alpha n + k_{\bx} + k_{\by} }{2}}\binom{(1-\alpha)n}{\frac{(1-\alpha) n + k_{\bx} - k_{\by}}{2}}
$$
as claimed. 
\end{proofof}

\end{document}